\theoremstyle{plain}
\newtheorem*{The unknotting conjecture}{The unknotting conjecture}
\newtheorem{thm}{Theorem}[section]
\newtheorem{lem}[thm]{Lemma}
\newtheorem{prop}[thm]{Proposition}
\newtheorem{cor}[thm]{Corollary}
\theoremstyle{remark}
\newtheorem{defn}[thm]{Definition}
\newtheorem{que}[thm]{Question}
\newtheorem*{Acknowledgement}{Acknowledgement}
\theoremstyle{remark}
\title{The triple point number of surface-knots of genus one is
at least four}
\author{A. Al Kharusi and T. Yashiro \footnote{2010 Mathematics Subject Classification:57Q45 (57M25).}  }
\date{}
\begin{document}
\maketitle
\begin{abstract}
In this paper, we show that there is no surface-knot of genus one with triple point
number invariant equal to three.
\end{abstract}
\section{Introduction}
A \textit{surface-knot} $F$ is a connected closed orientable surface embedded smoothly in the Euclidean $4$-space $\mathbb{R}^4$. It is called a \textit{2-knot} if it is homeomorphic to a 2-sphere. A surface-knot $F$ is called \textit{trivial} or \textit{unknotted} if $F$ bounds a handlebody embedded in $\mathbb{R}^3 \times \{0\}$. 
Projecting a surface-knot to $\mathbb{R}^3$ gives a \textit{generic surface} with singularity point set consisting of double points, isolated triple points and isolated branch points. 
The \textit{triple point number} of a surface-knot $F$, denoted by $t(F)$, is the minimal number of triple points taken over all possible projections of $F$.  Knotted surface-knots with zero triple point number form a well-known family of surface-knots, called \textit{pseudo ribbon}\cite{paper_2Kaw} (it is called \textit{ribbon 2-knots} in case of 2-knots).  It is known that there is no surface-knot $F$ with $t(F)=1$ \cite{paper_2Sat}. S. Satoh in \cite{paper_2Sat3} showed that for 2-knots, the triple point number is at least four.  In particular, the 2-twist spun trefoil is a 2-knot with the triple point number equal to four \cite{paper_2SatShim,paper_2Sat,paper_2Sat3}. By attaching a trivial 1-handle to the 2-twist spun trefoil, we obtain a surface-knot $F$ of genus one satisfying $t(F)=4$.  The authors proved in \cite{amtsu1} that there is no genus-one surface-knot with triple point number equal to two. The aim of this paper is to answer the following question.
\begin{que}\label{d}
Is there a surface-knot $F$ of genus one with $t(F)=3$?
\end{que} 
The paper is organized as follows. Sections 2 to 5 recall some basics about surface-knot diagrams including double point curves, double decker sets,  Alexander numbering of triple points and Roseman moves. Section 6 describes the diagram with three triple points of
a surface-knot $F$ satisfying $t(F) = 3$. Section 7 introduces some lemmas. Section 8 gives the
answer to the Question \ref{d}, where we show that there is no surface-knot of genus one with
triple point number equal to three.
\section{Double point curves of surface-knot diagrams}
Let $\Delta$ be a surface-knot diagram of a surface-knot $F$. By connecting double edges which are in opposition to each other at each triple point of $\Delta$,  the singularity set of a projection is regarded as a union of curves (circles and arc components) immersed in 3-space. We call such an oriented curve a \textit{double point curve}. A  double point curve with no triple points is called \textit{trivial}.

\begin{lem}[\cite{paper_2BW}]\label{even}
The number of triple points along each double point circle is even.
\end{lem}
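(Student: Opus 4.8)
The plan is to produce a $\mathbb{Z}/2$-valued quantity that is locally constant along the double point circle away from triple points, that changes value each time the circle runs through a triple point, and that is single-valued around the whole circle; evenness then follows because a closed loop must return this quantity to its starting value, so the number of changes must be even.

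First I would fix the local model at a triple point. Three sheets of the diagram meet there transversally, and by their fourth coordinates in the projection direction they are linearly ordered as top, middle and bottom. The three pairs of sheets meet in three double point arcs through the triple point, and these are precisely the opposite branches that are connected when forming the double point curves. For a double point curve $c$ I would record that it is everywhere the transverse intersection of a distinguished \emph{over}-sheet and \emph{under}-sheet (larger, resp. smaller, fourth coordinate). Since $c$ is a circle containing no branch points, its two preimage decker curves in the oriented surface $F$ are disjoint closed curves, so the over/under assignment, and hence the co-orientations of the two sheets inherited from the orientation of $F$, is globally consistent along $c$.

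Next I would bring in the Alexander numbering of the complementary regions. Along $c$ the two co-oriented sheets cut out four local quadrant regions, and I would single out the quadrant $R(t)$ into which both co-orienting normals point; this choice is canonical and varies continuously with $t \in c$. Define $\sigma(t)$ to be the Alexander number of $R(t)$ reduced mod $2$. Away from triple points no further sheet is crossed, so $\sigma$ is locally constant. The crucial local computation is that at a triple point the curve $c$ passes from one side of the remaining \emph{third} sheet to the other: comparing the chosen quadrant just before and just after the triple point, one crosses exactly this third sheet once, so the Alexander number changes by $\pm 1$ and $\sigma$ flips.

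Finally, because $c$ is closed and $R(t)$ returns to the same complementary region after one full traversal, $\sigma$ returns to its initial value, so the total number of flips, which is exactly the number of passages of $c$ through triple points, is even. I expect the main obstacle to be the local verification in the third step: checking uniformly, over the three ways the sheet pair of $c$ can sit among the top, middle and bottom sheets (so that the third sheet is respectively the bottom, middle or top one), that the canonical quadrant genuinely jumps by an odd amount at every triple point, and confirming that the co-orientation data defining $R(t)$ does not itself rotate as one goes around $c$. Once this local analysis is pinned down, the global parity argument closes immediately.
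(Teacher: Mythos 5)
Your proof is correct, but it uses a genuinely different device than the paper. The paper's proof equips the singular set with the BW orientation (citing Satoh): its defining property is that the two double edges in opposition at a triple point receive opposite orientations on the two sides of that triple point, so traversing a double point circle flips the orientation once per triple point passage, and a closed traversal forces an even number of flips. You instead take the mod $2$ Alexander number of the canonical quadrant $R(t)$ into which both orientation normals point; this is locally constant away from triple points and flips at each passage because the double curve crosses the third sheet transversely there, so $R(t)$ jumps across exactly one sheet and its Alexander number changes by $\pm 1$. Both are $\mathbb{Z}/2$-monodromy arguments with the same skeleton, but the invariants live in different places: the BW orientation is a structure on the singular set whose existence and coherence the paper delegates to the cited reference, whereas your invariant lives on the complementary regions and needs only the existence of an Alexander numbering, which is standard for projections of closed oriented embedded surfaces (and is introduced in Section 4 of this paper in any case), making your argument essentially self-contained. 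Two remarks on the points you flagged as potential obstacles: the three-case check (third sheet = top, middle, or bottom) is not actually needed, since the argument is identical in all cases --- the curve of intersection of two sheets is transverse to the third, full stop; and there is no rotation issue in the definition of $R(t)$, because the quadrant into which both normals point is symmetric in the two sheets and the normals are globally determined by the orientation of $F$, so your over/under discussion can be dropped entirely.
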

\begin{proof}
We assign a BW orientation to the singularity set such that the orientation restricted to double edges at every triple point is as depicted in Figure \ref{ori} (see  \cite{paper_2BW} for the BW orientation). It follows that a pair of double edges that are in opposition to each other at a triple point $T$ admit opposite orientations on both sides of $T$. Hence $n$ must be even.
\end{proof} 
 \begin{figure}[H]
\centering
\captionsetup{font=scriptsize}      
\mbox{\includegraphics[scale=0.66]{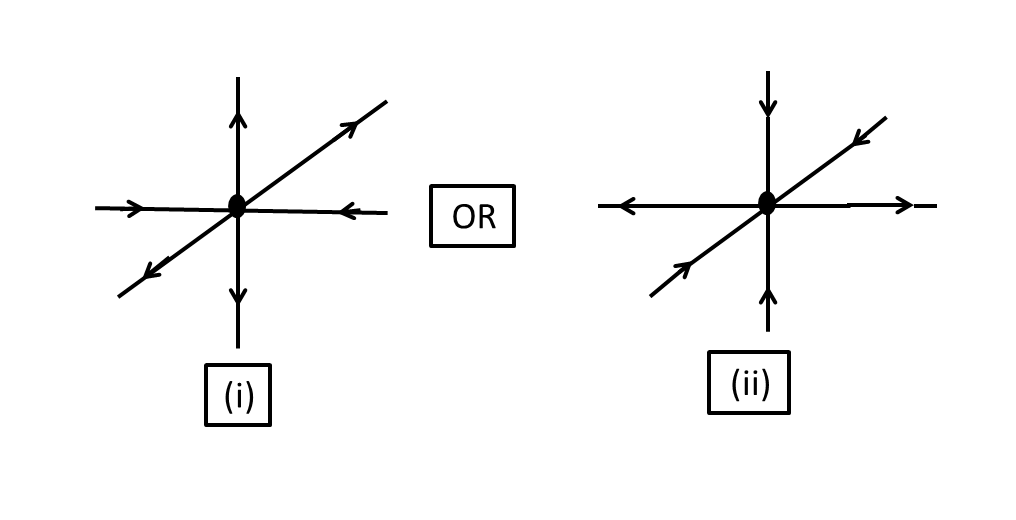}}
 \caption{BW orientation to the double edges at the triple point}
\label{ori}
\end{figure}

\section{Double decker sets of surface-knot diagrams}
The closure of the preimages of the double and triple point set of the projection of a surface-knot is called a \textit{double decker set} \cite{CarterSaito}.  There are two sheets intersect transversally at a double edge which we call the \textit{upper} and the \textit{lower} sheets with respect to the projection. The preimage of a double edge $e$  consists of two open arcs $e^U$ and  $e^L$ that lie in the preimages of upper and lower sheets, respectively. We call the preimage of a branch point also a \textit{branch point} and it is included in the closure of the preimage of the double point set. There are three sheets intersect transversally at a triple point $T$ which we call the \textit{top}, the \textit{middle} and the \textit{bottom} sheets with respect to the projection. The preimage of a triple point, $T$, is a union of three transverse crossing points $T^T$, $T^M$ and $T^B$ that lie in the preimages of the top, the middle and the bottom sheets, respectively.  
\begin{defn}
Let $C$ be a double point curve of $\Delta$. Then the union $C^U=\bigcup_{{e} \in C} \big(\overline{{e}^U}
\big)$ is called an \textit{upper decker curve}, where $ \overline{{e}^U}$ stands for the closure of $e^U$. The \textit{upper decker set} is the set $\mathcal{C}^U=\{C_1^U, \dots , C_n^U\}$ of upper decker curves.
\end{defn}
A \textit{lower decker curve} $C^L$ and the lower decker set $\mathcal{C}^L$ are defined analogously. In fact, the upper or lower decker curve is a circle or arc component immersed into $F$. The transversal crossing points of the immersed curves correspond to triple points in the projection.

\section{Signs of triple points, Alexander numbering and orientation of double edges}
We give sign to the triple point  $T$ of a surface-knot diagram as follows. Let $n_T$, $n_M$ and $n_B$ denote the normal vectors to the top, the middle and the bottom sheets presenting their orientations, respectively. The \textit{sign} of  $T$, denoted by $\epsilon(T)$, is $+1$ if the triplet $(n_T,n_M,n_B)$ matches the orientation of $\mathbb{R}^3$ and otherwise $-1$. This definition can be found in \cite{3}.\\
An \textit{Alexander numbering} for a surface-knot is a function that associates to  each 3-dimensional complementary region of the diagram an integer called the \textit{index} of the region.  These integers are determined by the following:
\begin{itemize}
\item[(i)] the indices of regions that are separated by a sheet differ by one and
\item[(ii)] the region into which a normal vector to the sheet points has the larger index. 
\end{itemize}
For a triple point $T$, the integer $\lambda(T)$ is called the \textit{Alexander numbering} of $T$ and defined as  the minimal Alexander index among the eight regions surrounding $T$.  Equivalently, $\lambda(T)$ is the Alexander index of a specific region $R$, called a \textit{source region},  where all orientation normals to the bounded sheets point away from $R$.\\
We define an orientation to a double edge in a surface-knot diagram so that for a tangent vector $v$ to the double edge at a double point $\mathcal{D}$, the triple $(n_U,n_L,v)$ gives the orientation of $\mathbb{R}^3$, where $n_U$ and $n_L$ are normal vectors to the upper sheet and the lower sheet at $\mathcal{D}$ presenting their orientations, respectively.

\section{Roseman moves}
Two surface-knot diagrams are said to be equivalent if and only if they are related by a finite sequence of seven local moves called \textit{Roseman moves}.  The move from left to right is denoted by $R-i^+$ and from right to left by $R-i^-$ $(i=1,2,3,4,5,6)$.
\begin{figure}[H]
\centering
\captionsetup{font=scriptsize}      
\mbox{\includegraphics[scale=0.4]{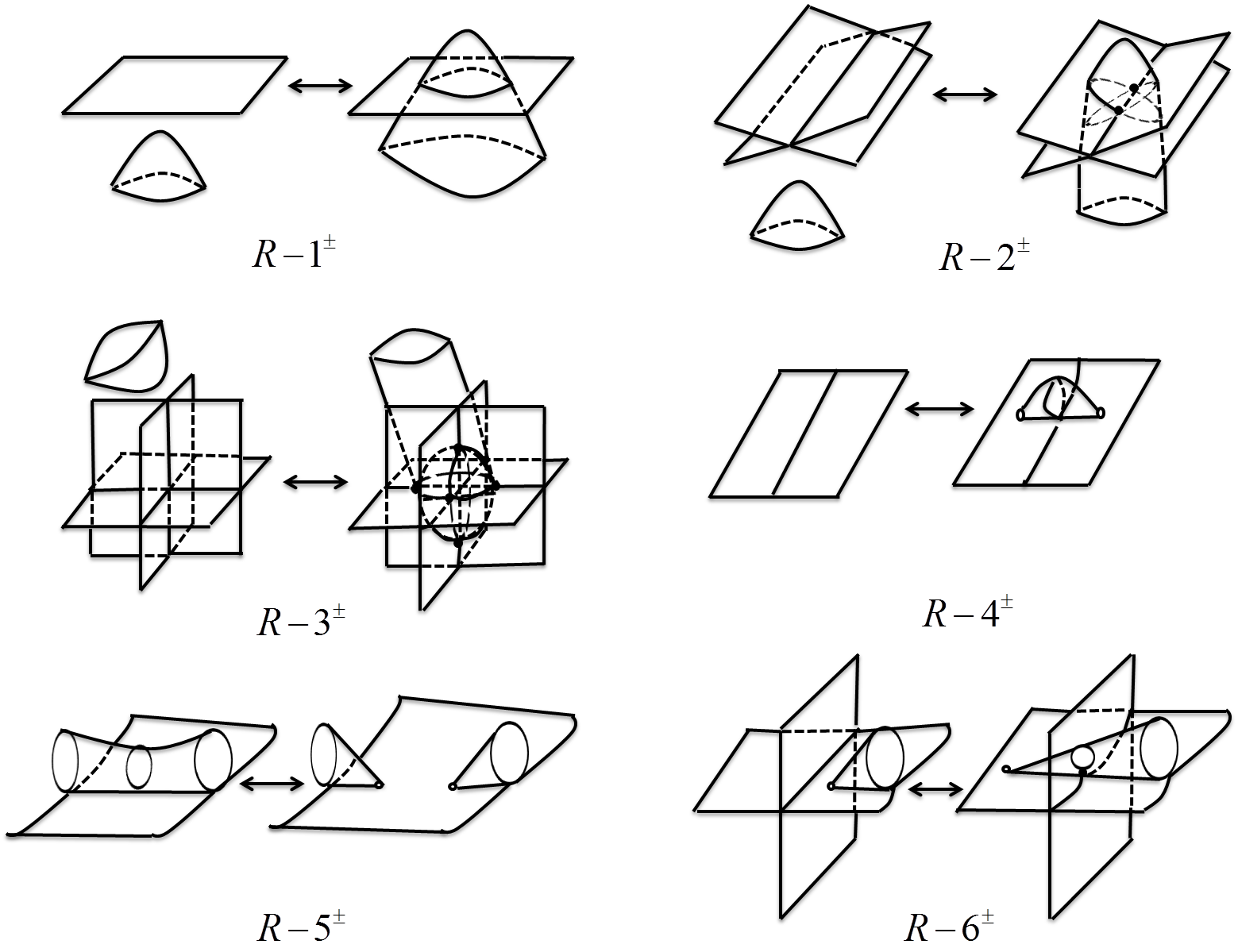}}
\end{figure}
\begin{figure}[H]
\centering
\captionsetup{font=scriptsize}       
   \mbox{\includegraphics[scale=0.2]{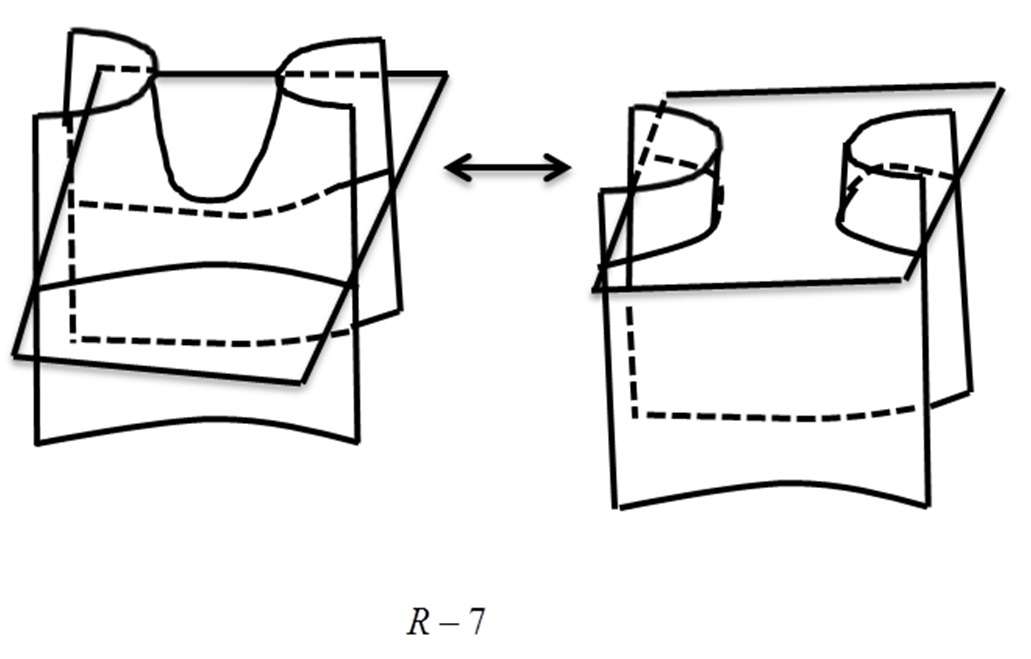}}
\caption{Roseman moves}
\label{2@}
   \end{figure}
We describe here Roseman move $R-7$. A disk $M$ embedded in a closure of one of complementary open regions of $\Delta$ is a \textit{descendent disk} if $M$ satisfies the following properties:
\begin{itemize}
\item[(i)] $\partial M =l_1 \cup l_2$, where $l_1$ and $l_2$ are two simple arcs in $\Delta$, 
\item[(ii)] $l_1 \cap l_2=\{c_1,c_2\}$, where $c_1$ and $c_2$ are double points and they are the endpoint of $l_1$ and $l_2$,
\item[(iii)] The endpoints of the pre-image of one of  $l_1$ or $l_2$ are both on upper decker set and both endpoints of the other one are on the lower decker set and
\item[(iv)] The double edges containing $c_1$ and $c_2$ have opposite orientation with respect to the orientation of the arc $l_1$ or $l_2$.
\end{itemize} 
In particular, if a descendent disk exists, then $R-7$ can be applied.\\  
There are six double edges incident to a triple point $T$. Such an edge is called  a \textit{$b/m$-, $b/t$- or $m/t$-edge} if it is the intersection of bottom
and middle, bottom and top, or middle and top sheets at $T$, respectively. 
 \begin{lem}[\cite{paper_2SatShim}]\label{R6}
Let $T$ be a triple point of a surface-knot diagram. Let $e$ be a $b/m$- or $m/t$-edge at $T$. If the other endpoint of $e$ is a branch point, then the triple point $T$ can be eliminated. 
\end{lem}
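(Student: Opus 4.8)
The plan is to build an explicit local model near the triple point $T$ and the branch point $b$, and then to recognize the resulting configuration as one side of a Roseman move so that $T$ can be cancelled. First I would fix coordinates so that the three sheets through $T$ are locally the three coordinate planes of $\mathbb{R}^3$, stacked as top, middle and bottom in the projection direction. In these coordinates the six double edges at $T$ are the six coordinate half-axes, and I would single out the given edge $e$, say a $b/m$-edge, as the intersection of the bottom and middle sheets running from $T$ to the branch point $b$. Passing to the double decker set, $e$ lifts to an arc that runs from $T^M$ on the middle sheet, through the preimage of $b$, to $T^B$ on the bottom sheet; this records that along $e$ the middle sheet lies above the bottom sheet and that the two sheets merge at $b$.

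The key observation, and the reason the hypothesis singles out $b/m$- and $m/t$-edges, is a separation property. For a $b/m$-edge the two sheets forming $e$ are the bottom and the middle, which are consecutive in the over/under order, so the top sheet appears near $e$ only in a neighborhood of $T$ and lies uniformly above the two sheets of $e$ there. Symmetrically, for an $m/t$-edge the bottom sheet lies uniformly below. Thus I would choose a regular neighborhood $N$ of $e \cup \{T,b\}$ in which, away from $T$, only the two sheets of $e$ appear, together with the transverse third sheet passing over (resp.\ under) them near $T$. Inside $N$ the branch point $b$ provides a free terminus where the double curve $e$ dies, that is, where the two sheets of $e$ become tangent and separate.

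Having isolated this picture, I would eliminate $T$ by sliding the branch point $b$ along $e$ toward $T$ and then through it; this is precisely a Roseman move of branch-point/triple-point type (the move $R$-$6$ in the conventions recalled above). Because the third sheet is uniformly on one side of $N$, the isotopy is unobstructed: $b$ is carried across the shadow of that sheet without creating or destroying any further triple points, and when $b$ meets $T$ the triple point together with the short double arc $e$ cancel, leaving a diagram of the same surface-knot with one fewer triple point. I would then verify that the orientation and Alexander-numbering data are consistent across the move, using the BW orientation of Lemma \ref{even} to track the double-edge orientations, so that the output is a genuine surface-knot diagram.

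The main obstacle I expect lies in the last step: confirming that the move is globally admissible rather than merely locally plausible. One must check that the third sheet really is separated on the correct side, which is exactly where the distinction between the allowed $b/m$- or $m/t$-edges and the forbidden $b/t$-edge matters, since for a $b/t$-edge the middle sheet sits between the two sheets of $e$ and obstructs the slide. One must also rule out that a hidden branch point, an extra self-intersection, or an orientation obstruction prevents $b$ from being pushed through $T$. Making the local model precise enough that the cancellation is manifestly a single Roseman move, while excluding these global obstructions, is the delicate part of the argument.
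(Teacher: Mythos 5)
Your proposal is correct and takes essentially the same approach as the paper: since $e$ is a $b/m$- or $m/t$-edge, the sheet transverse to $e$ at $T$ is the top or the bottom sheet, so the branch point can be slid along $e$ and pushed through that sheet by the Roseman move $R$-$6^-$, eliminating $T$. The additional global checks you flag are unnecessary, because the move is supported in a neighbourhood of the closure of $e$, whose interior contains no other singular points, and Roseman moves automatically preserve the surface-knot type.
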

\begin{proof}
The sheet transverse to $e$ is the top or the bottom sheet at $T$. Hence, we can apply the Roseman move $R-6^-$ to move the branch point along $e$. As a result, $T$ will be eliminated.   
\end{proof} 
A surface-knot diagram of a surface-knot $F$ is \textit{t-minimal} if it is with minimal number of triple points for all possible diagrams of $F$.
\begin{lem}\label{loim}
Let $\Delta$ be a surface-knot diagram of a surface-knot $F$ and let $T$ be  a triple point of $\Delta$. Suppose $e$ is  a $b/t$- and $m/t$-edge (resp.  $b/m$-edge) at $T$. Also suppose that the closure of $e^U$ (resp. $e^L$) in the double decker set bounds a disk $D$ in $F$ such that the interior of $D$ does not meet the double decker set. Then $\Delta$ is not $t$-minimal.
\end{lem}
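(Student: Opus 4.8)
The plan is to treat $D$ as an innermost disk and use it to isotope an extremal sheet of $F$ across the sheet that it meets along $e$, cancelling $T$ in the process. First I would pin down which sheet $D$ comes from. If $e$ is a $b/t$- or $m/t$-edge, then the top sheet is the upper sheet along $e$, so $\overline{e^U}$ lies in the top sheet and passes through the crossing $T^T$; dually, if $e$ is a $b/m$-edge, then the bottom sheet is the lower sheet along $e$, so $\overline{e^L}$ lies in the bottom sheet and passes through $T^B$. In both cases the hypothesis gives a disk $D\subseteq F$ bounded by this decker arc whose interior is disjoint from the whole double decker set. The first consequence I would extract is that the projection $f$ restricted to $\mathrm{int}\,D$ is injective, with image disjoint from the rest of $f(F)$; hence $f(D)$ is an embedded disk in $\mathbb{R}^3$ meeting the remainder of the diagram only along $f(\partial D)$, which lies on the double point curve carrying $e$.

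The core step is to push $f(D)$ off the sheet it crosses. In the first case $\overline{e^U}$ lies in the top sheet, so $f(D)$ is the over-sheet everywhere along its boundary; I would therefore isotope $f(D)$ in the projection-normal direction, on the side into which its orientation normal points, away from the transverse sheet. Because $\mathrm{int}\,D$ is clean, this isotopy creates no new double points, and its only effect is to erase the double arcs forming $f(\partial D)$ together with the triple point $T$ sitting at an end of $e$. Concretely I would realize the isotopy by Roseman moves: if the far endpoint of $e$ is a branch point the cancellation is an $R-6^-$ move in the spirit of Lemma \ref{R6}, while if it is a triple point the disk $M=f(D)$ serves as a descendent disk and the move $R-7$ removes the arcs bounding it. The second case is the mirror image under the exchange of top/bottom and upper/lower: now $f(D)$ is the under-sheet and one pushes it downward. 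Either way the new diagram presents $F$ with strictly fewer triple points, so $\Delta$ is not $t$-minimal.

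The hard part will be controlling the push globally and certifying that it strictly lowers the triple point number. Two issues demand care. Although $\mathrm{int}\,D$ carries no decker curves, other sheets of $F$ might a priori lie in the region swept out by the isotopy; to rule this out I would use that the chosen sheet is extremal, topmost in the first case and bottommost in the second, all along $\partial D$, so that pushing away from the transverse sheet moves $f(D)$ into free space and meets nothing. To invoke $R-7$ one must also verify the orientation hypothesis (iv) of a descendent disk, namely that the double edges at the two endpoints of its boundary arcs carry opposite orientations along $\partial M$; here I would combine the BW orientation of Figure \ref{ori} with the orientation rule for double edges from Section 5, and, when the far endpoint is itself a triple point, add a short Alexander-numbering check to be sure no competing triple point is introduced. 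Verifying that the only singularities consumed are those forced by $\partial D$ is the technical heart of the proof.
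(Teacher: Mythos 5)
There is a genuine gap, and it starts with a misreading of the hypothesis. In this lemma, ``$e$ is a $b/t$- \emph{and} $m/t$-edge (resp.\ $b/m$-edge) at $T$'' means that \emph{both} endpoints of $e$ are the same triple point $T$: $e$ is what Section 7 later calls a \emph{double point loop}, incident to $T$ once as a $b/t$-edge and once as an $m/t$-edge (resp.\ twice as a $b/m$-edge). This is forced by the rest of the hypothesis: only for such a loop is $\overline{e^U}$ (resp.\ $\overline{e^L}$) a \emph{closed} curve through $T^T$ (resp.\ $T^B$), and only a closed curve can bound the disk $D$. Your proposal instead treats $e$ as an arc with a ``far endpoint'' distinct from $T$, and your entire Roseman-move realization is a case analysis on that far endpoint (branch point versus triple point). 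Neither case is compatible with the hypothesis---if the second endpoint of $e$ were a branch point or a different triple point, then $\overline{e^U}$ would be an arc, not a closed curve, and would bound no disk---while the case that actually occurs (both ends of $e$ at $T$) is never treated. You cannot repair this by appealing to Lemma \ref{R6}, since that lemma requires a branch point at the far end of a $b/m$- or $m/t$-edge, and in the configuration of the present lemma there is no branch point anywhere.

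The two moves you invoke are also misapplied, and the ``push'' is not justified. First, $M=f(D)$ cannot serve as a descendent disk for $R-7$: by definition a descendent disk is embedded in the closure of a \emph{complementary region} of $\Delta$, and its boundary must be a union of two arcs $l_1\cup l_2$ meeting at two \emph{double points} with the stated upper/lower decker conditions; here $f(D)$ is part of the diagram itself, and its boundary is a single closed double point curve passing through a \emph{triple} point. Second, the claim that pushing $f(D)$ ``into free space'' meets nothing is unsupported: the hypothesis only says that $f(\mathrm{int}\,D)$ is disjoint from the rest of the diagram, so other sheets may lie at positive distance on exactly the side toward which you push; moreover ``top'' and ``upper'' refer to relative height in the fourth coordinate over the projection, not to any normal direction in $\mathbb{R}^3$, so extremality along $\partial D$ gives no control of the region swept in $3$-space. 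The paper's proof avoids both problems: it passes to a thin $3$-ball neighbourhood $B^3(D)$ of $f(D)$, whose intersection with the diagram is the standard local model of Figure \ref{R1} (the horizontal sheet being the top, resp.\ bottom, sheet of $T$), and then eliminates $T$ by an isotopy of $F$ in $\mathbb{R}^4$ realized locally by the move $R-5^+$ followed by $R-6^-$, the first move producing precisely the branch point configuration that Lemma \ref{R6} needs. Your proposal never engages with this local two-move cancellation, which is the actual content of the proof.
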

\begin{proof}
Let $B^3(D)$ be a 3-ball neighbourhood in 3-space of the projection of the disk $D$. Then clearly $B^3(D)$ must look like Figure \ref{R1}, where the horizontal sheet is the top (resp. bottom) sheet of $T$. There is an isotopy in $\mathbb{R}^4$ which changes Figure \ref{R1} to Figure \ref{R1dash}.  This isotopy is described in 3-space by attempting the move $R-5^+$ and then followed by the move $R-6^-$.  Hence, $T$ is eliminated and thus the lemma follows.
\end{proof}
\begin{figure}[H]
\captionsetup{font=scriptsize} 
  \centering
  
  \subfloat[]{\includegraphics[width=0.4\textwidth]{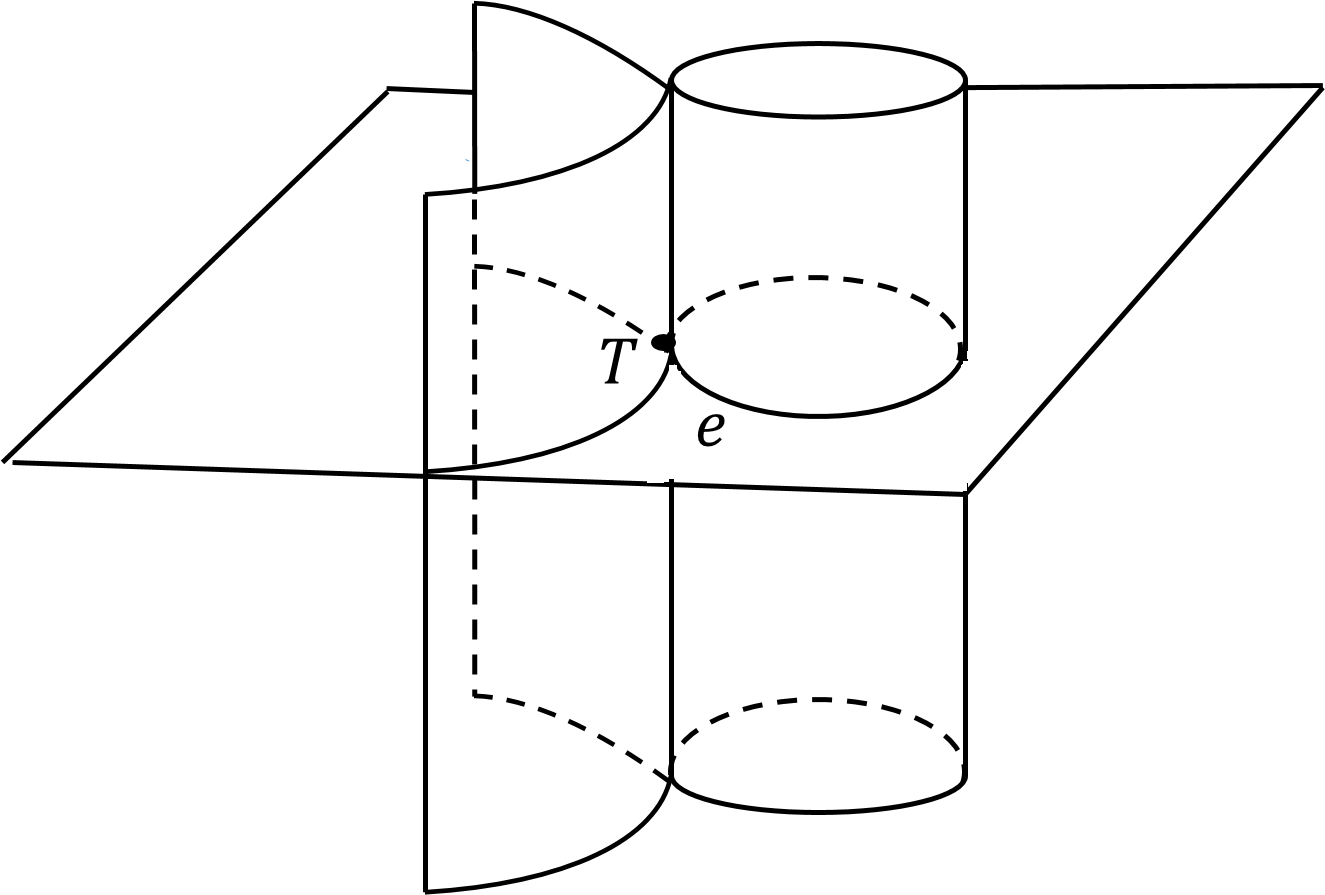}\label{R1}}
   \hfill
   \subfloat[]{\includegraphics[width=0.4\textwidth]{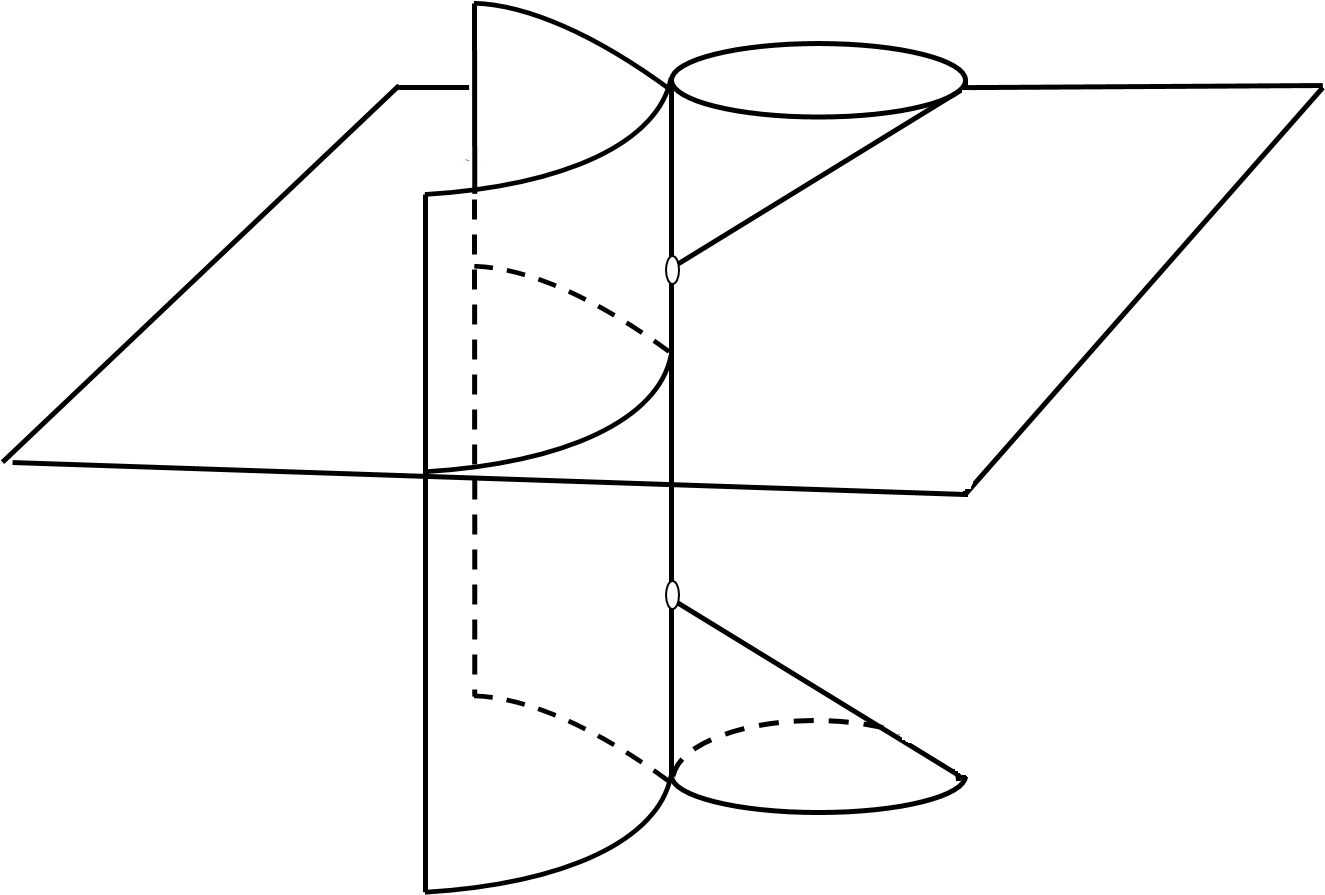}\label{R1dash}}
      
  \caption{The move $R-5^+$ is applied followed by the move $R-6^-$ to eliminate the triple point.}
 \label{can1}
\end{figure}

\begin{lem}\label{lo}
Let $\Delta$ be a surface-knot diagram of a surface-knot $F$ and let $T$ be  a triple point of $\Delta$. Suppose $e$ is  a $b/t$- and $m/t$-edge (resp.  $b/m$-edge) at $T$. Also suppose that the closure of $e^U$ (resp. $e^L$) in the double decker set bounds a disk $D$ in $F$ such that the projection of the interior of $D$ contains at most one triple point. Then the diagram $\Delta$ is not $t$-minimal.
\end{lem}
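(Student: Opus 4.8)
The plan is to reproduce the elimination procedure of Lemma~\ref{loim}, but inside a $3$-ball that is now allowed to contain one further triple point. As in that lemma the two cases are interchanged by reversing the vertical direction, so I treat the case that $e$ is a $b/t$- or $m/t$-edge, working with the circle $\overline{e^U}$ on the top sheet $S$ of $T$; the $b/m$-case, carried out with $\overline{e^L}$ on the bottom sheet, is the same picture turned upside down. Throughout, let $d$ denote the image of $\operatorname{int} D$ under the projection to $3$-space, a region lying on $S$. Since the conclusion only asserts that $\Delta$ is not $t$-minimal, it is enough to exhibit a finite sequence of Roseman moves that strictly lowers the number of triple points; there is no need to remove every triple point.

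First I would dispose of the case in which $d$ contains no triple point, which is almost exactly Lemma~\ref{loim}. The only discrepancy with the hypothesis of that lemma is that $\operatorname{int} D$ may still meet the double decker set in arcs and circles carrying no crossing. Passing to an innermost such component inside $D$ and removing it by Roseman moves that do not raise the number of triple points, I reduce to the situation in which $\operatorname{int} D$ is disjoint from the double decker set, whence Lemma~\ref{loim} applies verbatim and deletes $T$.

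The substantive case is that $d$ contains exactly one triple point $T'$. Then $S$ runs through $T'$, so precisely one of $T'^{T},T'^{M},T'^{B}$ lies in $\operatorname{int} D$, according as $S$ is the top, the middle, or the bottom sheet at $T'$; this is the trichotomy I must check. In each configuration I would build the $3$-ball neighbourhood $B^{3}(D)$ as in Lemma~\ref{loim}, now containing $S$, the sheets meeting $S$ near $\partial D$, and the remaining sheet(s) through $T'$, and having $T$ and $T'$ as its only triple points. The crucial step is to locate an innermost subdisk $D'\subset D$ cut off by the two decker arcs issuing from the preimage of $T'$, whose boundary lies on a decker curve of the correct upper (resp.\ lower) type, apply Lemma~\ref{loim} to $(T',D')$ to delete $T'$, and then apply Lemma~\ref{loim} again to $(T,D)$ to delete $T$; should an edge of $T'$ instead terminate at a branch point, Lemma~\ref{R6} removes $T'$ in its place. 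All of these moves are supported in $B^{3}(D)$ and create no triple point outside it, so the net effect is to delete $T$ and leave at most $T'$, strictly decreasing the triple point number.

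The step I expect to be the main obstacle is precisely this configuration analysis: one must verify, in each of the three subcases for the sheet of $T'$ equal to $S$, that an innermost subdisk of $D$ of the required upper/lower type genuinely exists and that the resulting $R-5^{\pm}$, $R-6^{\pm}$ sequence introduces no new triple points inside $B^{3}(D)$. I would control this by fixing the BW orientation of Lemma~\ref{even} on the double point curves through $T$ and $T'$ and tracking which sheet is outermost over $D$; because $\partial D$ lies on an outermost (top, resp.\ bottom) sheet of $T$, the crossing at $T'$ is forced into one of only finitely many oriented local pictures, and in each of them the orientation condition exhibits the subdisk $D'$ on which Lemma~\ref{loim} or Lemma~\ref{R6} can be run.
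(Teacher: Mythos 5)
Your overall strategy does match the paper's: clean the crossing-free components out of $D$, then use the decker arcs through the preimage of the inner triple point $T'$ to eliminate $T'$ itself via Lemma~\ref{loim} (or Lemma~\ref{R6} when a branch point is present), which already contradicts $t$-minimality. (Your final step of afterwards deleting $T$ as well is superfluous; removing $T'$ alone suffices.) However, your substantive case has two genuine gaps, and both are filled in the paper by a single idea you never invoke for this purpose: the parity Lemma~\ref{even}. First, the branch-point escape route. Lemma~\ref{R6} eliminates $T'$ only when the branch point sits at the end of a $b/m$- or $m/t$-edge at $T'$; it is stated that way for good reason, since no Roseman move pushes a branch point along a $b/t$-edge through the middle sheet. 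When the preimage of $T'$ inside $D$ is $T'^T$ or $T'^B$, one of the two decker arcs through it is a $b/t$-arc, and if the only branch point available lies on that arc, your appeal to Lemma~\ref{R6} fails. The paper closes this by deducing from Lemma~\ref{even} that one arc through the crossing carries a branch point if and only if the other does, so a branch point on an edge of the admissible type is always available.

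Second, the step you yourself flag as the main obstacle --- that in each of the three positions of the preimage of $T'$ an innermost subdisk of the correct upper/lower type exists --- is not merely unverified: in the middle-sheet position it is false. The four decker branches at $T'^M$ are the upper ends of the $b/m$-edges and the lower ends of the $m/t$-edges, so a decker loop closing up at $T'^M$ with a corner would have to be the upper preimage of its edge at one end and the lower preimage at the other, which is impossible because the upper/lower designation is constant along a double edge; hence no disk to which Lemma~\ref{loim} applies can arise there. The paper's resolution is again parity: Lemma~\ref{even} rules out either arc closing up smoothly on its own (that would produce a double point circle meeting exactly one triple point, an odd number), so in the branch-point-free situation the two arcs necessarily form a single immersed figure-eight; this structure can only occur at $T'^T$ or $T'^B$, and there each loop of the figure-eight is of exactly the type required by Lemma~\ref{loim} (after its interior is cleared of closed curves by the deformation of the first case). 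Without the parity argument, your case analysis stalls precisely where you predicted difficulty, and the branch-point case can fail outright.
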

\begin{proof}
Let $D$ be the disk in $F$ bounded by the closure of  $e^U$ (resp. $e^L$).  Suppose
the interior of $D$ meets the double decker set of $\Delta$. If the interior of $D$ contains a trivial double
decker arc that is bounded by two branch points. Then obviously the corresponding trivial double point arc in $\Delta$ can be deleted by the move $R-4^-$. Thus, we may assume that there is
no trivial double decker arc inside D. Consider the following cases.
\begin{itemize}
\item[(i)] Suppose the interior of $D$ contains only disjoint simple closed curves. We may apply a deformation along the boundary of $D$ in $F$ so that the interior of the deformed disk does not intersect with the double decker set. For case the intersection is just single closed simple curve, this deformations is shown in Figure \ref{des} schematically by Roseman moves in
$\mathbb{R}^3$. On the other hand, suppose $D$ contains nested simple closed curves $S_1 \supset  S2\supset \dots  \supset S_n$.  Then we first remove the inner most curve $S_n$. This can be done by (i) applying a
finite sequence of the move $R-1^+$ so that the finger reaches $S_n$ and then (ii) applying the
move $R-7$ so that the modified $D$ does not include $S_n$. Next apply the move $R-7$ $n -1$
times to remove the curves $S_{n-1}, S_{n-2}, \dots S_1$ in order. Therefore, we may assume that $D$
does not contain any double decker set. Note that the isotopy in 3-space explained above never create a triple point. Now $\Delta$ is not t-minimal arises from Lemma 5.2.
\item[(ii)] Assume that the interior of $D$ contains a pre-image of a triple point $T_1$ in $\Delta$. Then there
are two double decker arcs 
$\gamma_1$ and $\gamma_2$  intersecting transversely at a crossing point in the
interior of $D$, that is the pre-image of $T_1$. By the assumption, the projection of the interior of $D$ does not contain
triple points other than $T_1$. Hence from Lemma \ref{even}, we obtain that 
$\gamma_1$ attains a branch
point on its boundary if and only if 
$\gamma_2$ does. If branch points are present at the boundaries
of 
$\gamma_1$ and 
$\gamma_2$, clearly we may eliminate the triple point $T_1$ by attempting the move $R-6^-$.
Thus,  $\Delta$ is not t-minimal in such a case. Suppose on the contrary that the boundaries of 
$\gamma_1$ and 
$\gamma_2$ never contain a branch point. Since $T_1$ is the only triple point inside the
projection of $D$, 
$\gamma_1$ and 
$\gamma_2$ form the figure eight shape inside $D$. The crossing point of
the figure eight shape corresponds to the triple point $T_1$. We see that the two double
decker loops of the figure eight shape bounds disks, $D_1$ and $D_2$, such that $D_1$ and $D_2$
might meet the double decker set at only simple closed curves. Now, we may apply the
deformation explained in (i) above to modify $D_1$ or $D_2$ so that the modified disk does not
meet the double decker set. Therefore, $T_1$ can be eliminated by Lemma \ref{loim}. Hence, $\Delta$ is
not t-minimal.

\end{itemize}
\end{proof} 
\begin{figure}[H]
\centering
\captionsetup{font=scriptsize}      
\mbox{\includegraphics[scale=0.3]{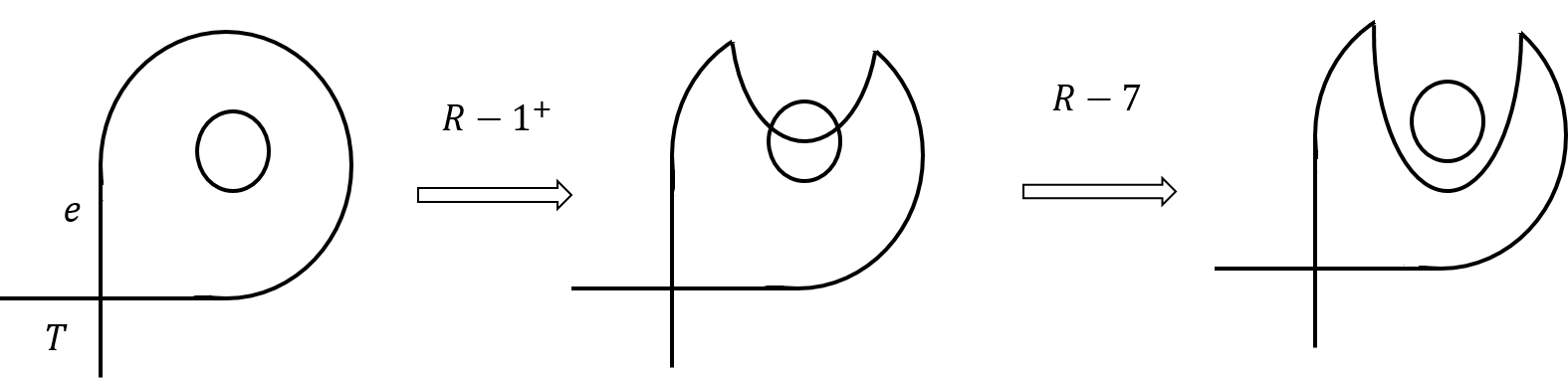}}
 \caption{The series of cross sections of the deformed diagram.}
 \label{des}
\end{figure}
\begin{lem}\label{c}
Let $\Delta$ be a surface-knot diagram of a surface-knot $F$. Let $T_1$ and $T_2$ be triple points in $\Delta$. Suppose $\Delta$ has double edges $e_1$ and $e_2$ such that 
\begin{itemize}
\item[(i)] $e_1$ is a $b/m$-edge at both $T_1$ and $T_2$ and  
\item[(ii)] $e_2$ is a $m/t$-edge at both $T_1$ and $T_2$.
\end{itemize}
Also suppose that the closure of $e_1^U \cup e_2^L$ in the double decker set bounds a disk $D$ in $F$ such that the projection of the interior of $D$ does not meet the double decker set. Then $\Delta$ is not $t$-minimal.
\end{lem}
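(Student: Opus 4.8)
The plan is to mimic the strategy of Lemma \ref{loim}: localize the configuration to a $3$-ball and then remove both triple points by an ambient isotopy realized through Roseman moves. First I would read off what the hypotheses force on the three sheets. Since $e_1$ is a $b/m$-edge at both $T_1$ and $T_2$, its upper decker arc $e_1^U$ lies on the middle sheet; since $e_2$ is an $m/t$-edge at both, its lower decker arc $e_2^L$ also lies on the middle sheet. Both arcs run from $T_1^M$ to $T_2^M$, so $\partial D=\overline{e_1^U}\cup\overline{e_2^L}$ is a bigon on the middle sheet with corners $T_1^M,T_2^M$, and the disk $D$ it bounds is a \emph{tongue} of the middle sheet. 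Along $\partial D$ the middle sheet lies over the bottom sheet (on the $e_1$-side) and under the top sheet (on the $e_2$-side), while the hypothesis that the interior of $D$ meets no decker set means that no further sheet projects onto the interior of the projected tongue $D'$.

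Next I would pass to a $3$-ball neighbourhood $B^3(D)$ of $D'$ in $3$-space. The observations above identify the only sheets meeting $B^3(D)$ as the middle sheet carrying $D'$, the bottom sheet crossing it along $e_1$, and the top sheet crossing it along $e_2$; the two $b/t$-edges issuing from $T_1$ and $T_2$ leave $B^3(D)$ transversally (they run off the middle sheet) and carry no triple point inside the ball. This is precisely the standard \emph{cancelling-pair} picture, which I would record in a figure in the spirit of Figures \ref{R1}--\ref{R1dash}. The cancellation is then effected by retracting the tongue $D$, equivalently by lifting the top sheet off the middle sheet across $D'$: this shrinks $e_1$ and $e_2$, brings $T_1$ and $T_2$ together, and annihilates them, while only sliding the two $b/t$-double edges along the bottom sheet and creating no new double curve, triple point or branch point. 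Decomposing this isotopy into Roseman moves -- applying $R-5^{+}$ to free the tongue and then $R-6^{-}$ at each corner, exactly as in Lemma \ref{loim} -- produces a diagram $\Delta'$ of $F$ with $t(\Delta')=t(\Delta)-2$. Hence $\Delta$ is not $t$-minimal.

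The main obstacle I anticipate is the global-to-local reduction together with the corner bookkeeping. One has to be sure that the clean-interior hypothesis really forces $B^3(D)$ to contain only the three named sheets, so that the $b/t$-edges at $T_1$ and $T_2$ genuinely exit the ball and no fourth sheet obstructs the retraction; the local model of a triple point (three coordinate planes, with the $b/m$- and $m/t$-edges lying in the middle sheet and the $b/t$-edge transverse to it) is what makes this plausible and is the point to nail down. The second delicate step is checking that performing the $R-5^{+}$/$R-6^{-}$ sequence simultaneously at the two corners creates no spurious triple point where the moves overlap; here the BW-orientation bookkeeping of Lemma \ref{even} is the natural tool, since it places $T_1$ and $T_2$ in cancelling position along the bigon, which is exactly the sign condition that makes the two-corner cancellation go through.
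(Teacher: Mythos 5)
Your proposal is correct and takes essentially the same route as the paper: the paper's proof likewise passes to a $3$-ball neighbourhood $B^3(D)$ of the projected disk, observes that the configuration is the standard one in which the middle sheet of both $T_1$ and $T_2$ is the ``vertical'' sheet pierced along $e_1$ and $e_2$, and then eliminates the two triple points simultaneously by an isotopy of $\mathbb{R}^4$ (the two-triple-point analogue of Lemma \ref{loim}). The only divergence is your proposed decomposition into an $R-5^{+}$ move followed by $R-6^{-}$ at each corner: the paper does not decompose this isotopy into Roseman moves at all (it simply exhibits the before/after pictures), and the cancellation here is most naturally a single triple-point-pair move with no branch points involved, but this detail does not affect the substance of the argument.
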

\begin{proof}
Let $B^3(D)$ be a 3-ball neighbourhood in 3-space of the projection of the disk $D$. Then clearly $B^3(D)$ must look like Figure \ref{R2n}, where the vertical sheet is the middle sheet of both $T_1$ and $T_2$. There is an isotopy in $\mathbb{R}^4$ which changes Figure \ref{R2n} to Figure \ref{R2dash}.  This isotopy has an affect of elimination of the two triple points $T_1$ and $T_2$. Thus $\Delta$ is not $t$-minimal. The lemma follows.\end{proof}
\begin{figure}[H]
  \centering
  
  \subfloat[]{\includegraphics[width=0.4\textwidth]{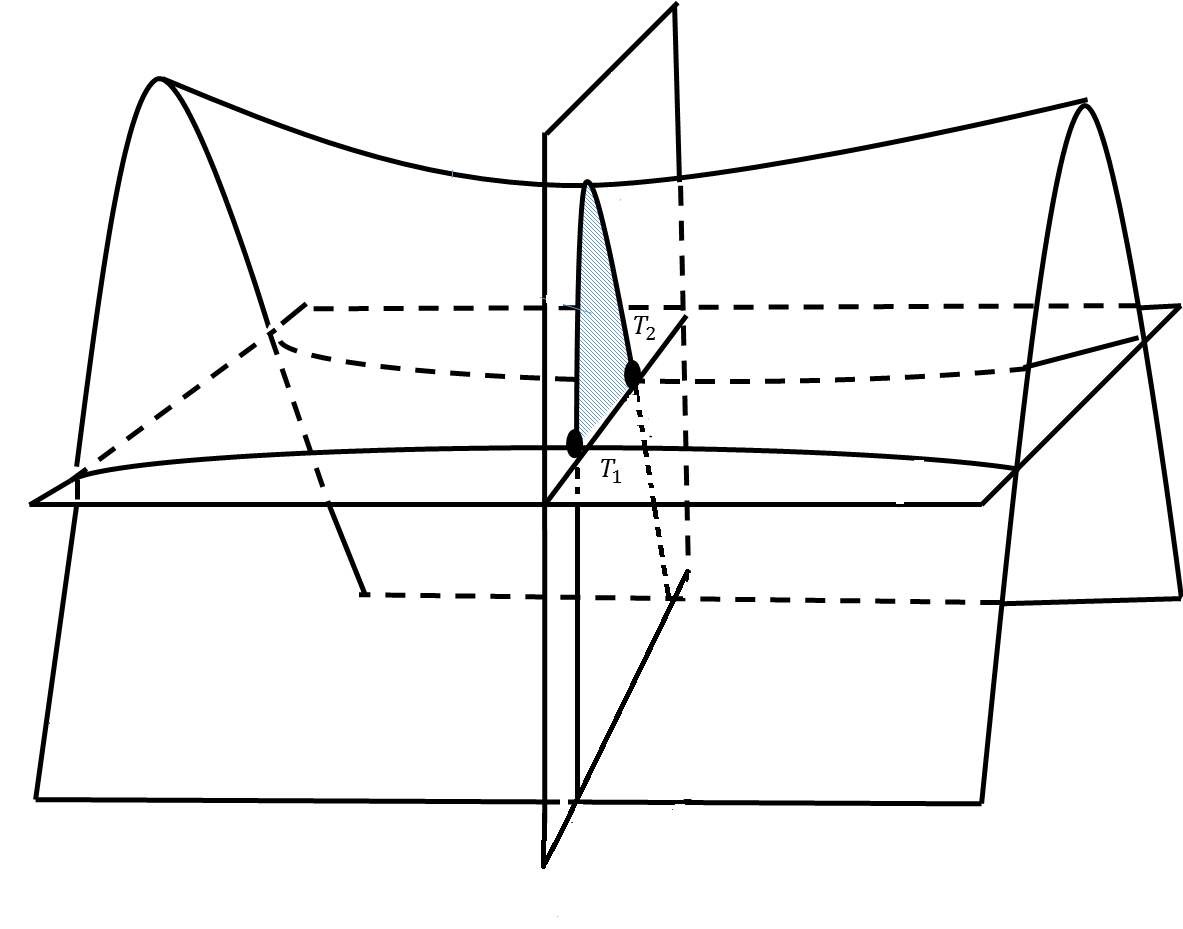}\label{R2n}}
   \hfill
   \subfloat[]{\includegraphics[width=0.4\textwidth]{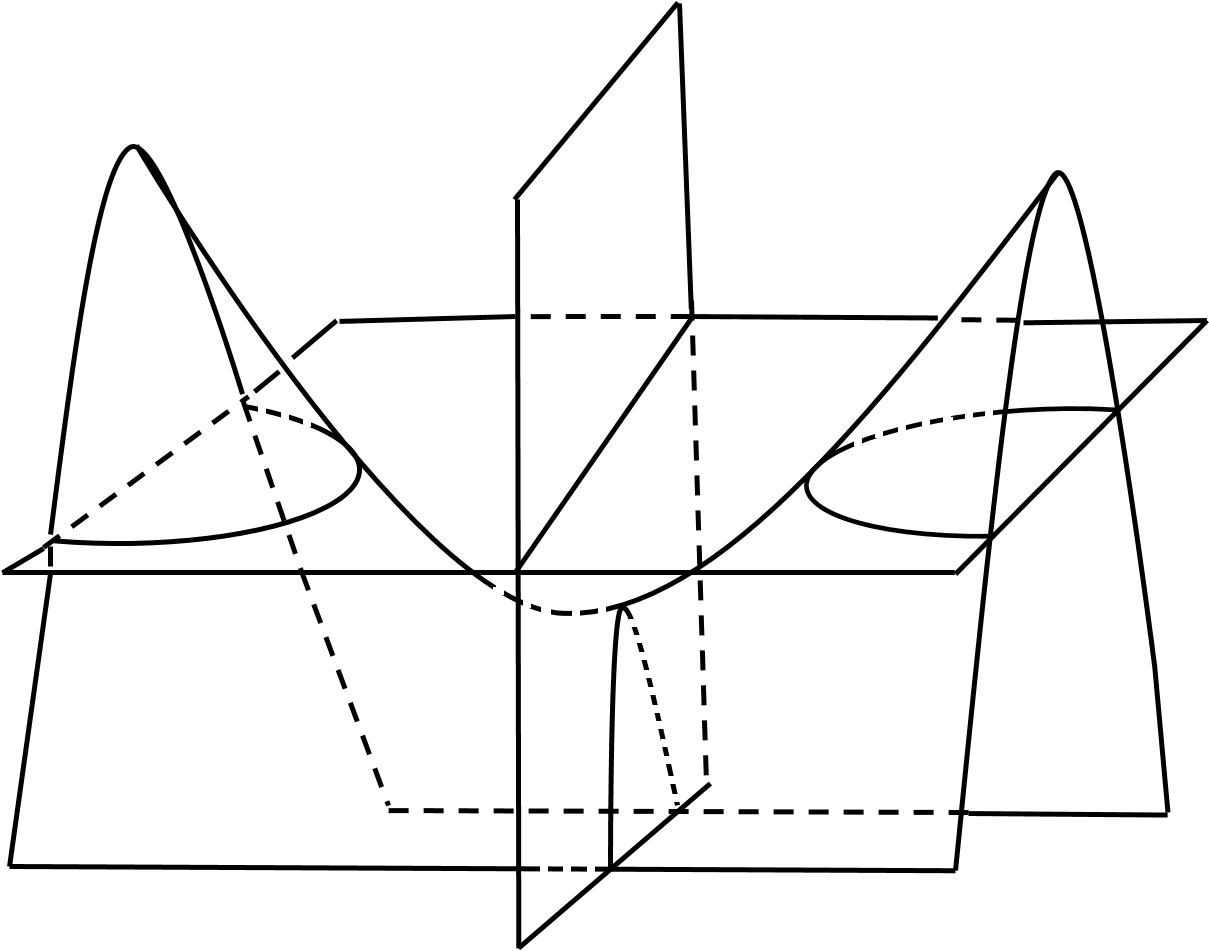}\label{R2dash}}
      
  \caption{The elimination of the pair of triple points. }
  \label{can1}
\end{figure}

\begin{lem}\label{c}
Let $\Delta$ be a surface-knot diagram of a surface-knot $F$. Let $T_1$ and $T_2$ be triple points in $\Delta$. Suppose $\Delta$ has double edges $e_1$ and $e_2$ such that 
\begin{itemize}
\item[(i)] $e_1$ is a $b/m$-edge at both $T_1$ and $T_2$ and 
\item[(ii)] $e_2$ is a $m/t$-edge at both $T_1$ and $T_2$.
\end{itemize}
Also suppose that the closure of $e_1^U \cup e_2^L$ in the double decker set bounds a disk $D$ in $F$ such that the projection of the interior of $D$ has at most one triple point. Then the diagram $\Delta$ is not $t$-minimal.
\end{lem}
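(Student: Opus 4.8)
The plan is to run the argument of Lemma \ref{lo} essentially verbatim, but with the base case supplied by the preceding lemma (the version just proved, in which the interior of $D$ is disjoint from the double decker set) in place of Lemma \ref{loim}. Let $D\subset F$ be the disk whose boundary is the closure of $e_1^U\cup e_2^L$; since $e_1$ is a $b/m$-edge and $e_2$ an $m/t$-edge at both $T_1$ and $T_2$, both $\overline{e_1^U}$ and $\overline{e_2^L}$ lie on the middle sheet and join the middle-sheet preimages of $T_1$ and $T_2$, so $\partial D$ is a loop on the middle sheet through these two crossing points. If the interior of $D$ already misses the double decker set we are done by the preceding lemma, so assume otherwise. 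As in Lemma \ref{lo}, any trivial double decker arc inside $D$ bounded by two branch points may first be deleted by $R-4^-$, so we may assume none is present; the remaining double decker set in the interior of $D$ then consists of disjoint simple closed curves together with, by hypothesis, the preimage of at most one triple point.

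First I would dispose of the case in which the interior of $D$ meets the double decker set only in disjoint simple closed curves. Exactly as in part (i) of Lemma \ref{lo}, a single such curve is cleared by pushing $\partial D$ across it through the deformation recorded in Figure \ref{des}, while a nest $S_1\supset S_2\supset\cdots\supset S_n$ is cleared innermost-first, removing $S_n$ by a finite sequence of $R-1^+$ moves followed by $R-7$ and then the remaining curves by $n-1$ further applications of $R-7$. None of these moves creates a triple point, so after finitely many of them $D$ has the same boundary and an interior disjoint from the double decker set, and the preceding lemma eliminates $T_1$ and $T_2$. Hence $\Delta$ is not $t$-minimal.

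Next I would treat the case in which the interior of $D$ contains the preimage of a single triple point $T_3$. Two double decker arcs $\gamma_1$ and $\gamma_2$ then cross transversely at the corresponding preimage point of $T_3$ inside $D$. Because $T_3$ is the only triple point whose projection meets the interior of $D$, Lemma \ref{even} forces $\gamma_1$ to carry a branch point on its boundary if and only if $\gamma_2$ does. If both do, then $R-6^-$ eliminates $T_3$; if neither does, then $\gamma_1$ and $\gamma_2$ form a figure eight bounding two subdisks $D_1,D_2$ whose interiors meet the double decker set only in simple closed curves, and after clearing those by the deformation of the previous paragraph, Lemma \ref{loim} eliminates $T_3$. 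In either subcase the number of triple points strictly decreases, so $\Delta$ is not $t$-minimal.

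The step I expect to require the most care is verifying that the reductions genuinely restore the hypotheses of the preceding empty-interior lemma: one must check that the finger moves $R-1^+$ and the disk-swallowing moves $R-7$ can be carried out inside $D$ without creating new triple points and without disturbing the loop $\partial D=\overline{e_1^U}\cup\overline{e_2^L}$ or the crossing points of $T_1$ and $T_2$ on it, so that after cleaning the interior the configuration is exactly the one covered by that lemma. The parity input of Lemma \ref{even}, which pairs the branch-point behaviour of $\gamma_1$ and $\gamma_2$, relies on there being no second interior triple point, and is what excludes the otherwise problematic mixed configuration in the second case.
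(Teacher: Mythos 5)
Your proposal is correct and is essentially the paper's own argument: the paper's proof consists of the single instruction to follow the proof of Lemma \ref{lo} (clearing closed curves inside $D$, or directly eliminating an interior triple point via the parity argument of Lemma \ref{even}, the $R-6^-$ move, and Lemma \ref{loim}) and then to invoke the preceding empty-interior lemma to eliminate $T_1$ and $T_2$. Your two-case expansion is exactly this reduction, just written out in full detail.
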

\begin{proof}
By following the proof of Lemma \ref{lo}, we may assume that the interior of $D$ does not meet  the double decker set. Now the result follows from the previous lemma.
\end{proof}

\section{$t$-minimal surface-knot diagrams with three triple points}

It is proved in \cite{paper_2SatShim} that a surface-knot diagram with odd number of triple points has at least two double edges, each of which is bounded by a triple point and a branch point. In particular, a $t$-minimal diagram with three triple points (if it exists) satisfies the following. 
\begin{lem}[\cite{paper_2Sat3}]\label{s}
Assume that there is a surface-knot $F$ with $t(F)=3$. Let $\Delta$ be a $t$-minimal diagram of $F$ whose triple points are $T_1$, $T_2$ and $T_3$. Then after suitable changes of indices, the other boundary point of any of the $b/t$-edges at $T_1$ or $T_2$ is a triple point while the other endpoint of any of the $b/t$-edges at $T_3$ is a branch point. Moreover, $\lambda(T_1)=\lambda(T_2)=\lambda(T_3)$ and $\epsilon(T_1)= \epsilon(T_2)=-\epsilon(T_3)$. 
\end{lem}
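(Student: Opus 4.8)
The plan is to separate the statement into a combinatorial part, describing how the $b/t$-edges at $T_1,T_2,T_3$ terminate, and a numerical part, giving the values of $\lambda$ and $\epsilon$; the numerical part will be driven by a single local relation read off from the Alexander numbering along a double edge.

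For the combinatorial part, the first observation is that $t$-minimality severely restricts which edges can reach a branch point. By Lemma \ref{R6}, if a $b/m$- or $m/t$-edge at some $T_i$ had a branch point as its other endpoint, then $T_i$ could be cancelled, contradicting minimality; hence every double edge terminating at a branch point is a $b/t$-edge at its triple-point end. Since three is odd, the quoted result of \cite{paper_2SatShim} supplies at least two edges running from a triple point to a branch point, and by the previous sentence these are $b/t$-edges. First I would analyse the six double edges at each of the three triple points: the $b/m$- and $m/t$-strands never meet a branch point, so by Lemma \ref{even} they close up into circles carrying an even number of triple points, leaving only the $b/t$-edges to absorb the branch points. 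Recalling that a double point curve passes through a triple point by joining two opposite edges of the same type, a $b/t$-strand entering a triple point from a branch point leaves it again as a $b/t$-edge; using the disk-elimination Lemmas \ref{loim}, \ref{lo} and \ref{c} to discard every configuration in which such a strand bounds a reducing disk or runs longer than necessary should force the two branch points to be the two ends of one $b/t$-arc through a single triple point, which we relabel $T_3$. The $b/t$-edges at the two remaining triple points then have triple points as their other endpoints.

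The engine for the numerical part is obtained by computing, from each of its two endpoints, the least Alexander index $k(e)$ among the four regions surrounding a double edge $e$. Using the definitions of $\epsilon$, of $\lambda$, and of the edge orientation from Section 5, one finds for an edge $e$ of constant type joining $T$ to $T'$ a relation of the form
\[
\lambda(T)-\lambda(T')=\pm\frac{\epsilon(T)+\epsilon(T')}{2},
\]
the sign being $+$ for a $b/t$-edge and $-$ for a $b/m$- or $m/t$-edge. The consequence I would exploit is orientation-free and identical for all three constant edge types: if the endpoints of $e$ carry opposite signs then $\lambda(T)=\lambda(T')$, while if they carry equal signs then $\lambda(T)$ and $\lambda(T')$ differ by exactly one. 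I would also keep track separately of any edge whose type changes between its two endpoints, for which this dichotomy is reversed.

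To finish, I would feed the configuration of the first step into this relation. The $b/t$-arc absorbed by $T_3$, the edges joining $T_3$ to $T_1$ and to $T_2$, and the $b/m$- and $m/t$-circles through the three points yield a closed system of equations in $\lambda_1,\lambda_2,\lambda_3$ and $\epsilon_1,\epsilon_2,\epsilon_3$. Requiring that the relation close up consistently around each double point circle (whose triple-point count is even, by Lemma \ref{even}) and along the $b/t$-arc at $T_3$ should force $\epsilon_1=\epsilon_2=-\epsilon_3$, after which the dichotomy that opposite signs force equal $\lambda$ at once gives $\lambda_1=\lambda_2=\lambda_3$. The hard part is the combinatorial step: pinning down the incidence pattern of the double point curves among three triple points and two branch points is delicate, because a priori many multigraphs on these vertices are possible, and it is precisely here that the elimination Lemmas \ref{loim}, \ref{lo} and \ref{c} must be applied repeatedly, both to exclude disk-bounding sub-configurations and to rule out $b/t$-arcs passing through more than one triple point. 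Once that pattern is fixed, the Alexander- and sign-bookkeeping is routine.
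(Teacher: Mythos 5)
First, a point of reference: the paper itself does not prove Lemma \ref{s}; its ``proof'' is a citation to \cite{paper_2Sat3}, and the statement is asserted for an \emph{arbitrary} surface-knot $F$ with $t(F)=3$, so any proof must be genus-independent. This is exactly where your combinatorial step --- which you concede is the hard part --- breaks down. You propose to pin down the branch-point configuration with the disk-elimination Lemmas \ref{loim}, \ref{lo} and \ref{c}, but these require a decker curve bounding a disk in $F$ whose interior meets at most one triple point. On a surface of positive genus such disks need not exist, and in this paper the disk-bounding facts are established only in Section \ref{section}, via Lemma \ref{TT} and homology of decker curves in the torus, \emph{after} and \emph{using} Lemma \ref{s}; invoking them here is circular. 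Moreover, even granted such disks, those lemmas treat decker loops and bigons and are silent about the configurations that actually must be excluded: two distinct non-trivial arcs, or one arc passing through two or three triple points (possibly entering the middle ones as $b/m$- or $m/t$-pairs). What excludes these is the numerical bookkeeping itself. Your edge relation, followed once around a double point circle, forces each circle to satisfy $\sum \eta\,\epsilon = 0$ over its triple-point passages, where $\eta=-1$ for a $b/t$-passage and $\eta=+1$ for a $b/m$- or $m/t$-passage. Summing over all circles then kills, for instance, the configuration with two one-passage arcs (the total is odd, hence nonzero) and the three-passage all-$b/t$ arc (the total is $2(\epsilon(T_1)+\epsilon(T_2)+\epsilon(T_3))\neq 0$), while in the surviving configuration --- one arc through a single triple point $T_3$ --- the same identity reads $\epsilon(T_1)+\epsilon(T_2)+2\epsilon(T_3)=0$, which is precisely $\epsilon(T_1)=\epsilon(T_2)=-\epsilon(T_3)$. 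So the sign and numbering relations must drive the combinatorial classification, not the other way around; your plan runs the two stages in the opposite order, and its first stage cannot be completed with the tools you allot to it.

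Second, your dichotomy is wrong for one kind of mixed edge, and the error is consequential. An edge that is a $b/m$-edge at one endpoint and an $m/t$-edge at the other is ``type-changing'' in your sense, yet it obeys the \emph{same} rule as a constant-type $b/m$- or $m/t$-edge: opposite signs force equal $\lambda$. The dichotomy reverses only when $b/t$ is mixed with $b/m$ or $m/t$. In the configuration the lemma asserts, the edges joining $T_1,T_2$ to $T_3$ include exactly such $b/m$-to-$m/t$ edges (see Table 1), and their endpoints carry opposite signs; your reversed rule would force $\lambda(T_k)\neq\lambda(T_3)$ there, contradicting the very conclusion you are trying to reach. (A smaller slip: asking the relation to ``close up along the $b/t$-arc at $T_3$'' is vacuous, since branch points impose no Alexander condition.) To be fair, your opening is sound and matches the start of Satoh's argument: Lemma \ref{R6} does force every edge ending at a branch point to be a $b/t$-edge at its triple point, the parity result of \cite{paper_2SatShim} does supply at least two such edges, and your $\lambda$--$\epsilon$ relation is the correct engine. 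But everything between those facts and the conclusion --- the case analysis over connection patterns --- is the actual content of the lemma, and the proposal does not supply it.
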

\begin{proof}
The proof can be found in \cite{paper_2Sat3}.
\end{proof}
Let $\Delta$ be a surface-knot diagram whose description is  given in the above lemma.  By the orientation of the double edges coming to the triple points and the Alexander numbering assigned to the set of complementary connected regions $\mathbb{R}^3\setminus \vert \Delta \vert$, we list all possible connections of double edges that are incident to the three triple points of $\Delta$. This enumeration is shown in Table 1. Since the triple points $T_1$ and $T_2$ have same Alexander Numbering and signs, we may use the symbol $T_k$ to refer to either $T_1$ or $T_2$. The following example explains how table 1 should be read. The $b/m$-edges at $T_k$ can be joined to a $b/t$-edge at $T_k$ or a $b/m$-edge at $T_3$ or a $m/t$-edge at $T_3$.
\begin{center}
\begin{tabular}{|c||c|c|c|}
\cline{1-4}
 $T$ & \multicolumn{3}{ c| }{type of edges at $T$}  \\ \cline{2-4}
 & $b/m$-edge & $b/t$-edge & $m/t$-edge   \\  \hline \hline  
$T_k$ & $(T_k)_{b/t}$ , $(T_3)_{b/m}$ ,   $(T_3)_{m/t}$ & $(T_k)_{b/m}, (T_k)_{m/t} $ & $(T_k)_{b/t} , (T_3)_{b/m},  (T_3)_{m/t}$  \\ \hline
  
$T_3$ & $(T_k)_{b/m},  (T_k)_{m/t}$ & branch point & $(T_k)_{b/m},  (T_k)_{m/t}$
\\
\hline

\end{tabular}
\[
\text{Table 1}
\]
\end{center} 
We point out that the diagram $\Delta$ might contains some trivial double point circles.

\section{Lemmas}\label{section}
Throughout this section, we use the following notations.  Let $F$ be a surface-knot of genus one satisfying $t(F)=3$.  Let $
\Delta$ be a $t$-minimal surface-knot diagram of $F$ whose triple points are $T_1$, $T_2$ and $T_3$. By Lemma \ref{s}, we may assume that $\epsilon(T_1)=\epsilon(T_2)=-\epsilon(T_3)$ and $\lambda(T_1)=\lambda(T_2)=\lambda(T_3)$.  Also $\Delta$ has exactly one non-trivial double point arc consisting of a single triple point, $T_3$. Let this double point arc be denoted by $C$. 
\begin{lem}\label{TT}
The simple closed double decker curve $C^U \cup C^L$ is not homologous to zero in $F$.
\end{lem}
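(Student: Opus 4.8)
The plan is to argue by contradiction, converting a hypothetical null-homology into a genuine reduction of triple points and so contradicting $t$-minimality. Suppose then that $\gamma:=C^U\cup C^L$ is homologous to zero in $F$. Since $F$ has genus one it is a torus, and a simple closed curve on a torus is null-homologous if and only if it is separating; moreover a separating simple closed curve on a torus bounds a disk on one side, because cutting along it yields two compact surfaces whose genera sum to one, so one of them is a disk. Hence $\gamma$ bounds an embedded disk $D\subset F$, and the entire strategy is to use $D$ to remove the triple point $T_3$, which contradicts $t(F)=3$. I emphasise that this first step is exactly where the genus-one hypothesis is spent: for a $2$-knot every simple closed curve bounds a disk, so the statement would be vacuous there.

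First I would pin down the combinatorics of $C$. By Lemma~\ref{s} the two $b/t$-edges at $T_3$ are precisely the double edges of $\Delta$ whose other endpoints are branch points, say $b_1$ and $b_2$; since $C$ is the unique non-trivial double point arc and it carries only $T_3$, it must be the $b/t$-double curve through $T_3$, that is $C=e_1\cup e_2$ with $e_1,e_2$ the two $b/t$-edges. Consequently $C^U$ lies on the top sheet and $C^L$ on the bottom sheet, the two arcs meet exactly at $b_1,b_2$, and $C^U$ (resp.\ $C^L$) runs through the top preimage $T_3^T$ (resp.\ the bottom preimage $T_3^B$) of $T_3$. Thus $\gamma=\partial D$ is assembled from one top-sheet arc and one bottom-sheet arc joined at the two branch points, which is exactly the local configuration from which a $b/t$-edge at $T_3$ can be collapsed.

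Next I would clear the interior of $D$, following verbatim the simplification carried out in the proof of Lemma~\ref{lo}: any trivial double decker arc inside $D$ is deleted by $R-4^-$, any disjoint or nested family of simple closed decker curves is removed by $R-1^+$ together with $R-7$, and Lemma~\ref{even} is used to pair branch points along the interior crossings. None of these moves creates a triple point. After this reduction the interior of $D$ meets the double decker set in at most the remaining triple-point preimages (those of $T_1$, $T_2$, and possibly $T_3^M$). I would then use the adjacency list of Table 1 to cut this down so that $\mathrm{int}(D)$ is disjoint from the decker set; at that stage the projected neighbourhood $B^3(D)$ is precisely the standard model of Lemma~\ref{loim}, and the move $R-5^+$ followed by $R-6^-$ eliminates $T_3$. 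The resulting diagram of $F$ has at most two triple points, contradicting $t(F)=3$. Hence $\gamma$ cannot bound a disk, and therefore $C^U\cup C^L$ is not homologous to zero in $F$.

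The step I expect to be the main obstacle is the triple-point bookkeeping inside $D$: a priori both $T_1$ and $T_2$, each contributing up to three decker crossings on $F$, may lie in the interior of $D$, and the elimination model of Lemma~\ref{loim} applies only once these interior triple points have been removed. The crux is therefore to exploit the edge-adjacency constraints of Table 1 (which force every $b/m$- and $m/t$-edge at $T_3$ to run to $T_1$ or $T_2$) to show that the interior crossings can be pushed across $\partial D$ or cancelled without increasing the triple point count, so that Lemma~\ref{lo} and the local move of Lemma~\ref{loim} finally apply to $T_3$.
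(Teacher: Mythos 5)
Your opening reduction is fine: on a genus-one surface a null-homologous simple closed curve is separating and hence bounds a disk, and your identification of $C$ as the $b/t$-arc through $T_3$ ending at the two branch points agrees with the paper's setup. The gap is the clearing step, and it is fatal. First, Lemmas \ref{loim} and \ref{lo} do not apply to your curve $\gamma = C^U\cup C^L$: their hypotheses require the bounding curve to be the closure of $e^U$ (resp. $e^L$) of a double point loop at a triple point, whereas $\gamma$ is assembled from an upper arc and a lower arc joined at two branch points, and the paper has no local model that eliminates $T_3$ from a disk with such a boundary. Second, and more fundamentally, the interior of $D$ can \emph{never} be made disjoint from the double decker set: at the boundary point $T_3^B$ the lower decker arc of the $b/m$-edges at $T_3$ crosses $\gamma$ transversely, and at $T_3^T$ the upper decker arc of the $m/t$-edges at $T_3$ does; one half of each of these arcs necessarily enters $\mathrm{int}(D)$. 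These arcs are attached to $T_3$ itself, so no amount of $R-1^+$, $R-4^-$, $R-7$ surgery performed inside $D$ can remove them. The sentence ``I would then use the adjacency list of Table 1 to cut this down so that $\mathrm{int}(D)$ is disjoint from the decker set'' is exactly the step that cannot be carried out, and you do not carry it out; it is the entire content of the lemma.

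The forced arc at $T_3^B$ is, in fact, the paper's (much shorter) proof in disguise. The paper takes the portion $C_{1_1}$ of the double point circle containing the $b/m$-edges at $T_3$ whose lower decker preimage $C_{1_1}^L$ is a simple closed curve in $F$; this curve meets $C^U\cup C^L$ transversely in the single point $T_3^B$. A closed curve having odd geometric intersection number with a simple closed curve cannot be null-homologous, by the mod 2 intersection pairing on $H_1(F;\mathbb{Z}/2)$ --- or, in the language of your disk: if $\gamma$ bounded a disk $D$, then $C_{1_1}^L$ would enter $D$ at $T_3^B$ and could never leave, since $T_3^B$ is its only crossing with $\partial D$. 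So the repair of your argument is to abandon the Roseman-move surgery entirely and count intersections; once you notice the arc entering $D$ at $T_3^B$, you are one sentence away from the paper's proof, and no $t$-minimality or move-based elimination of $T_3$ is needed at all.
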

\begin{proof}
Let $C_1$ be the double point circle in $\Delta$ containing the $b/m$-edges at $T_3$ and let $C_2$ be the double point circle in $\Delta$ containing the $m/t$-edges at $T_3$. Consider the double decker set of $\Delta$.  Since $C_1^L$ is the lower decker curve and $C_2^U$ is the upper decker curve, it holds that  $C_1^L \neq C_2^U$. Let $C_{1_1}$ be the portion of $C_1$ in $\Delta$ that contains the $b/m$-edges at $T_3$ such that the pre-image
of $C_{1_1}$ in the lower sheet is a simple closed curve. In particular, the simple closed curve $C_{1_1}^L$ intersects $C^U \cup C^L$ transversely at a single crossing point, that is the pre-image of $T_3$ in the bottom sheet.
Therefore $C^U \cup C^L$ is not homologous to zero in $F$. The lemma follows.
\end{proof}
The double edge that is a $b/t$- and  $m/t$- (resp.  $b/m$-)edge at a triple point is called a \textit{double point loop}.
\begin{lem}\label{loop}
There is no double point loop in $\Delta$.
\end{lem}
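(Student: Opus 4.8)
The plan is to assume that $\Delta$ contains a double point loop and then to produce a triple-point-reducing move, contradicting the $t$-minimality of $\Delta$.

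First I would pin down where such a loop can sit. A double point loop contributes a simple closed curve $\sigma$ in $F$—the closure of the upper decker $e^{U}$ in the $b/t$- or $m/t$-case, the lower decker $e^{L}$ in the $b/m$-case—which meets the triple-point preimages only over a single triple point $T$. Reading Table~1 against Lemma~\ref{s} eliminates $T=T_{3}$: its $b/t$-edges terminate at branch points, while its $b/m$- and $m/t$-edges run to $T_{k}$, so no decker can close up over $T_{3}$; the same table also excludes the $b/m$-type at $T_{k}$, since no $b/m$-edge at $T_{k}$ returns to a $b/m$-edge there. Hence the loop is of $b/t$- or $m/t$-type at $T_{k}$ with $k\in\{1,2\}$, and $\sigma$ lies in the closure of the top sheet at $T_{k}$, touching the remaining decker curves only at $T_{k}^{T}$.

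The crux is to show that $\sigma$ bounds a disk despite $F$ having genus one, and here I would use Lemma~\ref{TT} homologically. By its proof the simple closed curve $C^{U}\cup C^{L}$ (supported over $T_{3}$, through $T_{3}^{T}$ and $T_{3}^{B}$) meets the lower decker $C_{1_{1}}^{L}$ transversally in exactly one point $T_{3}^{B}$; thus these two curves have intersection number $\pm1$ and form a basis of $H_{1}(F)\cong\mathbb{Z}^{2}$ for the unimodular intersection pairing. Since $C^{U}\cup C^{L}$ is supported over $T_{3}$ and $C_{1_{1}}^{L}$ is a lower decker, neither passes through $T_{k}^{T}$, so each is disjoint from $\sigma$. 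Therefore $[\sigma]\cdot[C^{U}\cup C^{L}]=[\sigma]\cdot[C_{1_{1}}^{L}]=0$, whence $[\sigma]=0$, and a null-homologous simple closed curve on the torus bounds a disk $D\subset F$.

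It then remains to reduce the diagram using $D$. Because $C^{U}\cup C^{L}$ and $C_{1_{1}}^{L}$ are essential and disjoint from $\partial D=\sigma$, they lie in $F\setminus D$; in particular $T_{3}^{T}$ and $T_{3}^{B}$ lie outside $D$. Applying the same homological eviction to the other essential deckers over $T_{3}$ and over the remaining triple point, I would show that the interior of $D$ projects to at most one triple point, so that Lemma~\ref{lo} eliminates a triple point—or, in the configuration where a $b/m$- and an $m/t$-edge are shared by two triple points lying inside $D$, Lemma~\ref{c} does—contradicting $t$-minimality. The main obstacle is exactly this final bookkeeping: deciding which of the nine triple-point preimages fall inside $D$ and matching the resulting local picture to Lemma~\ref{lo} or Lemma~\ref{c}. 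The homological vanishing of $[\sigma]$ is the step that neutralises the genus-one difficulty absent from Satoh's $2$-knot argument; once $\sigma$ is known to bound, the reduction inside $D$ proceeds by the innermost-disk analysis already used in the proof of Lemma~\ref{lo}.
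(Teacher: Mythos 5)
Your route is the paper's own: exclude a loop based at $T_3$, use the pair of decker curves $C^U\cup C^L$ and $C_{1_1}^L$ from the proof of Lemma \ref{TT} as a homology basis of the torus, observe that the loop's decker closure misses both and is therefore null-homologous, hence bounds a disk $D$, and then contradict $t$-minimality through Lemma \ref{lo}. The homological step is carried out correctly and is exactly the paper's argument. However, your opening case analysis contains a genuine error: you discard the ``$b/m$-type'' loop at $T_k$ on the grounds that no $b/m$-edge at $T_k$ returns to a $b/m$-edge at $T_k$. That reasoning only excludes $b/m$-to-$b/m$ closures. The second kind of double point loop---an edge that is a $b/t$-edge at one end and a $b/m$-edge at the other end of the \emph{same} triple point---is permitted by Table 1 (the $b/m$ row at $T_k$ contains $(T_k)_{b/t}$), and it is precisely ``the other case'' treated in the paper's proof. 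For such a loop your curve $\sigma$ does not lie in the top sheet at all: the relevant decker loop is $\overline{e^L}$, passing through $T_k^B$. So your conclusion that the loop must be of $b/t$-/$m/t$-type with $\sigma$ through $T_k^T$ is false as stated, and half of the possible loops are left untreated. The omission is repairable by symmetry: $\overline{e^L}$ also misses both generators (they pass only through preimages of $T_3$), so it too bounds a disk, and the $b/m$-case of Lemma \ref{lo} applies.

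Second, the step you yourself call ``the main obstacle''---verifying the hypothesis of Lemma \ref{lo} that the projection of the interior of $D$ contains at most one triple point---is exactly what must be established for the reduction to go through, and you do not establish it; you only say you ``would show'' it by further homological eviction. The paper disposes of this point in a single sentence, from the fact that $\Delta$ has only three triple points, and then applies Lemma \ref{lo}; no appeal to Lemma \ref{c} is needed anywhere in this argument. As submitted, then, your proposal reproduces the paper's central homological idea but stops short of the contradiction: one family of loops is ruled out by a misreading of Table 1, and the reduction inside $D$ is announced rather than performed.
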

\begin{proof}
Since the $b/t$-edges at $T_3$ are ended with branch points, $\Delta$ has no double point loop based at $T_3$. Suppose $e$ is a double point loop based at $T_1$ (resp. $T_2$) such that $e$ is a $b/t$-  and $m/t$-edge at $T_1$ (resp. $T_2$). Let $C_1$ be  the double point circle in $\Delta$ containing the $b/m$-edges at $T_3$. Let $C_{1_1}$ be the portion of $C_1$ that contains the $b/m$-edges at $T_3$ such that the pre-image of $C_{1_1}$ in the lower sheet is a simple closed curve. Consider the double decker set of $\Delta$ in $F$. The simple closed double decker curves $C^U \cup C^L$ and $C_{1_1}^L$ intersect transversely once in $F$. This implies that $[C^U \cup C^L]$ and $[C_{1_1}^L]$ represent the two distinct generators of the first homology group of $F$, $H_1(F)$. The loop $\overline{e^U}$ does not meet any of the generators and hence the subset of  $F$ enclosed  by $\overline{e^U}$ is homotopic to a disk, denoted by $D$.  Because $\Delta$ has only three triple points, the projection of the interior of $D$ can have at most one triple point. In particular, the assumption of Lemma \ref{lo} is satisfied and thus $\Delta$ is not $t$-minimal. This is a contradiction.  An analogous argument can be used to show the other case of the double point loop where $e$ is a $b/t$-  and $b/m$-edge at $T_1$ (resp. $T_2$).  
\end{proof}
\begin{lem}\label{descendent}
For $\Delta$, there exists no double edges $e_1$ and $e_2$ such that:  
\begin{itemize}
\item[(i)] $e_1$ is a $b/m$-edge at $T_3$ and  $b/m$-edge at $T_1$ (resp. $T_2$) and
\item[(ii)] $e_2$ is a $m/t$-edge at $T_3$ and  $m/t$-edge at $T_1$ (resp. $T_2$).
\end{itemize}
\end{lem}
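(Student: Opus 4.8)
The plan is to suppose, for contradiction, that such double edges $e_1$ and $e_2$ exist and to show that the configuration they produce is precisely the one eliminated by Lemma \ref{c}, so that $\Delta$ would fail to be $t$-minimal. The first step is to locate the relevant decker arcs. Since $e_1$ is a $b/m$-edge at $T_3$ and at $T_1$, its upper sheet is the middle sheet at each of these triple points, so $\overline{e_1^U}$ is an arc in the middle-sheet part of the double decker set joining $T_3^M$ to $T_1^M$; since $e_2$ is a $m/t$-edge at $T_3$ and at $T_1$, its lower sheet is the middle sheet, so $\overline{e_2^L}$ is likewise an arc in the middle sheet joining $T_3^M$ to $T_1^M$. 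Set $\Gamma=\overline{e_1^U}\cup\overline{e_2^L}$. As $e_1$ lies on the $b/m$-curve through $T_3$ while $e_2$ lies on the $m/t$-curve through $T_3$, the two arcs meet only at their endpoints, and I would record that $\Gamma$ is an embedded circle in $F$ whose only triple-point preimages are the corner points $T_3^M$ and $T_1^M$.

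The decisive step is to prove that $\Gamma$ bounds a disk in $F$. Exactly as in the proof of Lemma \ref{loop}, the classes $[C^U\cup C^L]$ and $[C_{1_1}^L]$ form a basis of $H_1(F)\cong\mathbb{Z}^2$ with intersection number $\pm 1$; because the intersection pairing on the torus is unimodular, it suffices to check that $\Gamma$ has vanishing algebraic intersection with both of these generators, for then $[\Gamma]=0$ and an embedded null-homologous curve on a torus bounds a disk $D$. To evaluate the intersection numbers I would use that two decker curves can meet only at preimages of triple points. Now $C^U\cup C^L$ runs through the top and bottom sheets, passing through $T_3^T$ and $T_3^B$, and $C_{1_1}^L$ lies in the bottom sheet; none of these points is a middle-sheet preimage. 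Since the only triple-point preimages on $\Gamma$ are $T_3^M$ and $T_1^M$, the curve $\Gamma$ is disjoint from both generators, both intersection numbers are zero, and $\Gamma=\partial D$ for a disk $D\subset F$.

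It then remains to feed $D$ into Lemma \ref{c} with the triple points $T_1$ and $T_3$ playing the roles of $T_1$ and $T_2$ there. The edge hypotheses hold verbatim, since $e_1$ is a $b/m$-edge at both $T_1$ and $T_3$ and $e_2$ is a $m/t$-edge at both. For the remaining hypothesis I would argue, as in Lemma \ref{loop}, that because $\Delta$ has only three triple points and the two incident to $\Gamma$ are $T_1$ and $T_3$, the projection of the interior of $D$ can contain at most the single triple point $T_2$. Lemma \ref{c} then shows that $\Delta$ is not $t$-minimal, contradicting the choice of $\Delta$; the statement with $T_2$ in place of $T_1$ follows by the same argument because $T_1$ and $T_2$ carry equal signs and Alexander numberings. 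I expect the main obstacle to be the bookkeeping of the second step: one must keep careful track of the sheet (top, middle, or bottom) in which each decker arc lies and confirm that $\Gamma$ shares no triple-point preimage with either homology generator, since a single missed crossing would destroy the vanishing of the intersection numbers and hence the disk.
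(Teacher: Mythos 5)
Your overall strategy coincides with the paper's: both form the simple closed curve $\Gamma=\overline{e_1^U}\cup\overline{e_2^L}$, show it is null-homologous in $F$ by playing it against the basis of $H_1(F)$ given by $[C^U\cup C^L]$ and $[C_{1_1}^L]$ (Lemmas \ref{TT} and \ref{loop}), conclude that $\Gamma$ bounds a disk whose interior projects onto at most one further triple point, and invoke Lemma \ref{c} to contradict $t$-minimality. So the skeleton matches the paper's proof.

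The gap is in the step you yourself single out as the crux: the claim that $\Gamma$ is \emph{disjoint} from $C_{1_1}^L$ because ``$C_{1_1}^L$ lies in the bottom sheet''. That claim is false in general. $C_{1_1}$ is a portion of the double point circle $C_1$ through the $b/m$-edges at $T_3$, and at this stage of the paper nothing prevents $C_1$ from containing $m/t$-edges as well --- indeed it may contain $e_2$ itself. Concretely, the connection
\[
C_1=e_{31}(b/m,b/m)\cup e_{12}(b/m,b/t)\cup e_{21}(b/t,m/t)\cup e_{13}(m/t,m/t)\cup e_{32}(m/t,m/t)\cup e_{23}(m/t,b/m),
\]
together with $C_2=e_{12}(b/t,b/m)\cup e_{21}(b/m,b/t)$ and the branch-point arc $C$ at $T_3$, is compatible with Table 1, Lemma \ref{s} and Lemma \ref{loop}, and it satisfies the hypotheses of the present lemma with $e_1=e_{31}(b/m,b/m)$ and $e_2=e_{13}(m/t,m/t)$. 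In this configuration the lower decker curve of $C_1$ passes through $T_3^B,T_1^B,T_2^B,T_1^M,T_3^M,T_2^M$ and is already a simple closed curve, so $C_{1_1}^L=C_1^L$: it is not confined to bottom sheets, it runs through both $T_1^M$ and $T_3^M$, and it contains the entire arc $\overline{e_2^L}$. Hence $\Gamma$ and $C_{1_1}^L$ share an arc rather than being disjoint; your intersection count no longer follows by inspection, and deciding whether $[\Gamma]\cdot[C_{1_1}^L]=0$ would require pushing $\Gamma$ off $\overline{e_2^L}$ and controlling the signs of the two resulting crossings near $T_1^M$ and $T_3^M$, orientation data your argument never engages. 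The conclusion can be rescued, but only by letting the second generator depend on the configuration: in the example above the upper decker curve $\overline{(e_{21}(b/t,m/t))^U}\cup\overline{(e_{13}(m/t,m/t))^U}\cup\overline{(e_{32}(m/t,m/t))^U}$ meets $C^U\cup C^L$ exactly once at $T_3^T$ and is genuinely disjoint from $\Gamma$, so it can replace $C_{1_1}^L$. With your fixed choice of $C_{1_1}^L$ the disjointness step fails; to be fair, the paper's own proof is terse at precisely this point, but your proposed justification by sheet bookkeeping does not close it.
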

\begin{proof}
Let $C_1$ be the double point circle in $\Delta$ containing the $b/m$-edges at $T_3$. Let $C_{1_1}$ be the portion of $C_1$ that contains the $b/m$-edges at $T_3$ such that the pre-image of $C_{1_1}$ in the lower sheet is a simple closed curve. It is shown in the previous lemma that $[C^U \cup C^L]$ and $[C_{1_1}^L]$ represent the two distinct generators of the first homology group of $F$, $H_1(F)$. Suppose for the sake of a contradiction that $\overline{e_1^U} \cup \overline{e_2^L}$ is not homologous to zero in $F$. Let the open sub-surface enclosed by $\overline{e_1^U} \cup \overline{e_2^L}$ be denoted by $F_1$. It is very easy to see that  $C^U \cup C^L$ lies in the complement of  $F_1$ in $F$. In fact,  $F_1$ does not meet the subsets of $F$ bounded by $C^U \cup C^L$ and $C_{1_1}^L$. Since $[C^U \cup C^L] \neq [C_{1_1}^L]\neq [0]$ in $H_1(F)$, we obtain that $F_1$ is homotopic to an open disk and that  $\overline{e_1^U} \cup \overline{e_2^L}$ is homologous to zero in $F$. Now, Lemma \ref{c} implies that $\Delta$ is not $t$-minimal. This is a contradiction.
\end{proof}

For triple points $T_i$ and $T_j$ in $\Delta$, an edge joins an $a/b$-edge at $T_i$ and $c/d$-edge at $T_j$ , the edge
will be denoted by $e_{ij}(a/b,c/d)$, $ (1 \leq  i, j \leq 3)$.
\begin{lem}\label{Tsukasa}
Let $C_1$ be a double point circle defined by
\[
C_1 = e_{32}(b/m, b/m) \cup e_{23}(b/m, m/t) \cup e_{31}(m/t, b/m) \cup e_{13}(b/m, b/m).
\]
The double point circle $C_1$ does not exist in the $t$-minimal surface-knot diagram $\Delta$.
\end{lem}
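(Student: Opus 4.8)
The plan is to lift $C_1$ to the double decker set and then compare the resulting curves with the decker curve $C^U\cup C^L$ of the arc $C$, exploiting that $F$ has genus one.

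First I would record, edge by edge, the upper and lower decker arcs of $C_1$. Reading off the types in the definition of $C_1$, the upper arcs close up into a single simple closed curve $C_1^U=\overline{e_{32}^U}\cup\overline{e_{23}^U}\cup\overline{e_{31}^U}\cup\overline{e_{13}^U}$ passing through $T_3^M,T_2^M,T_3^T,T_1^M$, and the lower arcs close up into $C_1^L=\overline{e_{32}^L}\cup\overline{e_{23}^L}\cup\overline{e_{31}^L}\cup\overline{e_{13}^L}$ passing through $T_3^B,T_2^B,T_3^M,T_1^B$. Their only common point is $T_3^M$, the preimage of $T_3$ on the middle sheet, where the $b/m$-upper arc of $C_1^U$ crosses the $m/t$-lower arc of $C_1^L$ transversally. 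Hence $[C_1^U]\cdot[C_1^L]=\pm1$, so $\{[C_1^U],[C_1^L]\}$ is a basis of $H_1(F)\cong\mathbb Z^2$; and since $C_1^L$ is exactly the simple closed curve $C_{1_1}^L$ of Lemma \ref{loop}, this is consistent with what is already known.

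Next I would place $C^U\cup C^L$ in this picture. Because $C$ is made of the two $b/t$-edges at $T_3$, its decker curve meets the rest of the decker set only at $T_3^T$ (where $C^U$ crosses the $m/t$-upper arc of $C_1^U$) and at $T_3^B$ (where $C^L$ crosses the $b/m$-lower arc of $C_1^L$). Thus $C^U\cup C^L$ meets $C_1^U$ exactly once and $C_1^L$ exactly once, and together with the crossing at $T_3^M$ the three simple closed curves $C^U\cup C^L$, $C_1^U$, $C_1^L$ pairwise intersect in a single point, these three points being precisely the three preimages $T_3^T,T_3^M,T_3^B$ of $T_3$.

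From here the aim is to convert this rigid configuration into a triple-point reduction. Cutting $F$ along the nonseparating curve $C^U\cup C^L$ (nontrivial by Lemma \ref{TT}) turns $F$ into an annulus in which $C_1^U$ and $C_1^L$ become two spanning arcs meeting once; resolving the resulting cell structure should isolate a subdisk $D\subset F$ whose boundary consists of decker arcs of $C_1$ and of $C$. I would then argue as in the proof of Lemma \ref{lo} that, since $\Delta$ has only three triple points, the projection of the interior of $D$ can be simplified by Roseman moves (deleting trivial arcs by $R-4^-$ and innermost simple closed curves by $R-1^+$ and $R-7$) until it carries at most one triple point, at which point an elimination of the type in Lemmas \ref{loim}--\ref{c} removes a triple point. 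This contradicts the $t$-minimality of $\Delta$, so $C_1$ cannot exist.

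The hardest step is the last paragraph: producing the embedded disk $D$ and certifying that its interior can be cleared down to at most one triple point, which must be reconciled with the constraints $\lambda(T_1)=\lambda(T_2)=\lambda(T_3)$ and $\epsilon(T_1)=\epsilon(T_2)=-\epsilon(T_3)$. The real bookkeeping obstacle is tracking, through the three preimages of $T_3$, which global sheet each of the eight decker arcs of $C_1$ lies on, since it is this sheet data that determines whether the candidate disk is actually embedded and which reduction applies.
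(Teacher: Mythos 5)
Your first two paragraphs are essentially correct: the upper and lower decker curves of $C_1$ are simple closed curves meeting transversally only at $T_3^M$, and $C^U\cup C^L$ meets $C_1^U$ only at $T_3^T$ and $C_1^L$ only at $T_3^B$. But note that this configuration cannot yield a contradiction by itself: three simple closed curves on a genus-one surface pairwise intersecting in one point are perfectly realizable (curves of slopes $0$, $\infty$ and $1$ on the torus), and homologically $[C^U\cup C^L]=\pm[C_1^U]\pm[C_1^L]$ is consistent with all of Lemmas \ref{TT}--\ref{descendent}. This is precisely why this lemma, unlike the neighbouring ones in Section \ref{section}, cannot be settled by the intersection-number argument; genuinely $3$-dimensional input about the projection is required.

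The gap is your third paragraph, and it is a real one. Cutting along $C^U\cup C^L$ does turn $F$ into an annulus in which $C_1^U$ and $C_1^L$ are spanning arcs, and the complementary regions are disks; but the boundary of any such disk is a union of arcs coming from several distinct double edges (upper arcs of some, lower arcs of others) together with pieces of $C^U\cup C^L$. None of the reduction lemmas you invoke applies to such a disk: Lemmas \ref{loim} and \ref{lo} require the boundary to be $\overline{e^U}$ (resp.\ $\overline{e^L}$) of a single double point loop, and Lemma \ref{c} requires it to be $\overline{e_1^U}\cup\overline{e_2^L}$ with $e_1$ a $b/m$-edge and $e_2$ an $m/t$-edge at the same two triple points; the circle $C_1$ contains no double point loop and no such pair of edges. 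So ``an elimination of the type in Lemmas \ref{loim}--\ref{c}'' is simply not available, and what you call the hardest step is not bookkeeping but the entire content of the lemma. The paper's proof goes a different way: it first observes that the remaining double edges are forced (avoiding loops) to form the circles $C_2=e_{12}(b/t,m/t)\cup e_{21}(m/t,b/t)$ and $C_3=e_{12}(m/t,b/t)\cup e_{21}(b/t,m/t)$, then shows that the double decker set, and hence any diagram realizing $C_1$, is determined up to homeomorphism, and finally exhibits in that explicit diagram a descendent disk; the Roseman move $R-7$ along it carries a branch point onto an $m/t$-edge at $T_2$, after which $T_2$ is eliminated (Lemma \ref{R6}), contradicting $t$-minimality. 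To complete your approach you would in effect have to reconstruct this explicit diagram, or prove a new reduction lemma for disks bounded by mixed upper/lower decker arcs, since the decker-curve data on $F$ alone does not force any triple-point reduction here.
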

\begin{proof}
We shall show that there exists a surface-knot diagram having $C_{1}$ but the diagram is not $t$-minimal.

The remaining double edges at $T_{1}$ and $T_{2}$ form two double point circles, $C_{2}$ and $C_{3}$, such
that 
\begin{align*}
	C_{2} &= e_{12}(b/t, m/t)\cup e_{21}(m/t, b/t) \\
	C_{3} &= e_{12}(m/t, b/t)\cup e_{21}(b/t, m/t) 
\end{align*} 
Note that we avoid the connections which give a double point loop.

The connection of double point curves is depicted in Figure \ref{631}.
\begin{figure}[H]
\centering
\captionsetup{font=scriptsize}      
\mbox{\includegraphics[scale=0.4]{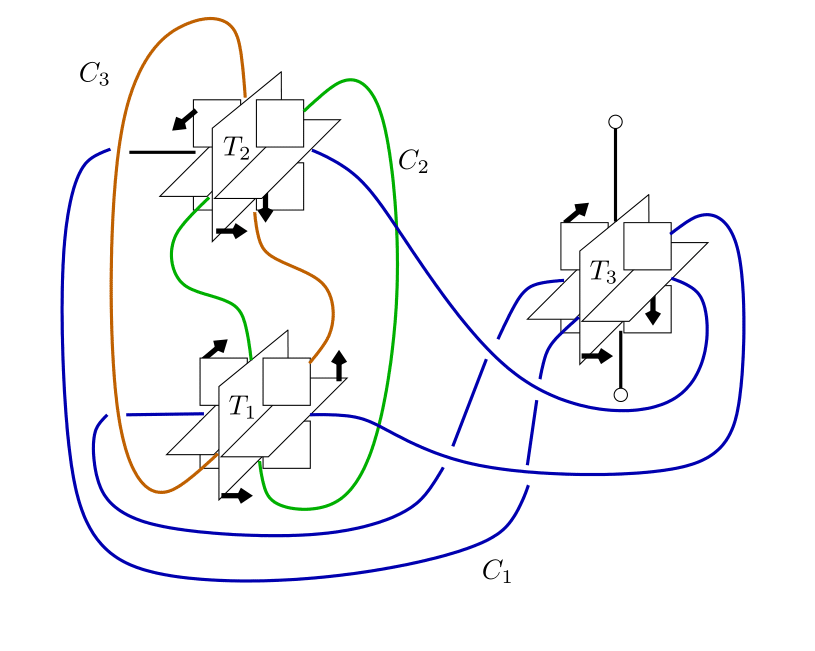}}
\caption{The double point curves between three triple points.}\label{631}
\end{figure}
The following describes the construction of the surface-knot diagram containing the double point curves depicted in Figure \ref{631}. 

Let $T_{i}^{X}$ denote the pre-image of the triple point $T_{i}$ $i=1, 2, 3$ on the $X$-sheet, where 
$X$ represents the top or middle or bottom.
The double decker set is depicted in Figure \ref{632}, in which the left rectangle represents 
the torus obtained by pasting the opposite sides in usual manner.

Let $E_{1}$ and $E_{2}$ be connected components of the double decker set such that
$E_{1}$ contains $T_{3}^{T}$ and $E_{2}$ contains $T_{1}^{T}$ and $T_{2}^{T}$.

The connected double decker set $E_{1}$ can be drawn as the left diagram in Figure \ref{632}.
\begin{figure}[H]
\centering
\captionsetup{font=scriptsize}      
\mbox{\includegraphics[scale=0.5]{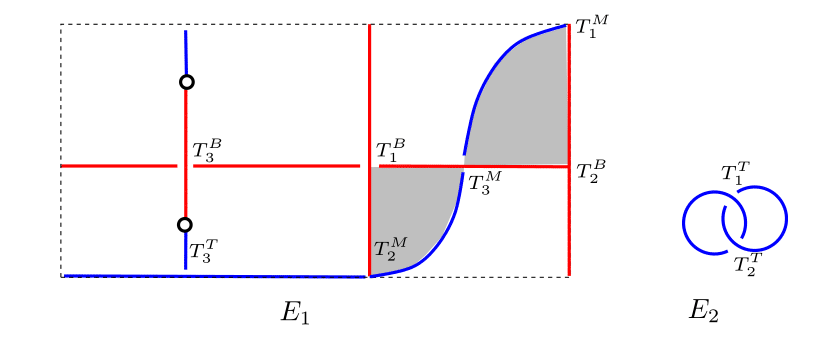}}
\caption{The double decker sets $E_{1}$ and $E_{2}$ on the rectangle.}\label{632}  
\end{figure}
The complement of the double decker set $E_{1}$ consists of seven open disks.  
$E_{2}$ should be placed in the complement of $E_{1}$. 
Let $D$ be the right half of the rectangle in Figure \ref{632} containing $T_{3}^{M}$ (see Figure \ref{632}). 
It is easy to see that $E_{2}$ is not on $D$. 

The closure of $D$ in $F$ forms an annulus and it contains two triangles, each of which has $\{ T_{1}^{B}, T_{2}^{M}, T_{3}^{M}\}$ and $\{T_{1}^{M}, T_{2}^{B}, T_{3}^{M}\}$ as its vertices respectively.
The annulus has two boundary circles which are projected on the component circles
of $E_{2}$ (see Figure \ref{633}).

\begin{figure}[H]
\centering
\captionsetup{font=scriptsize}      
\mbox{\includegraphics[scale=0.4]{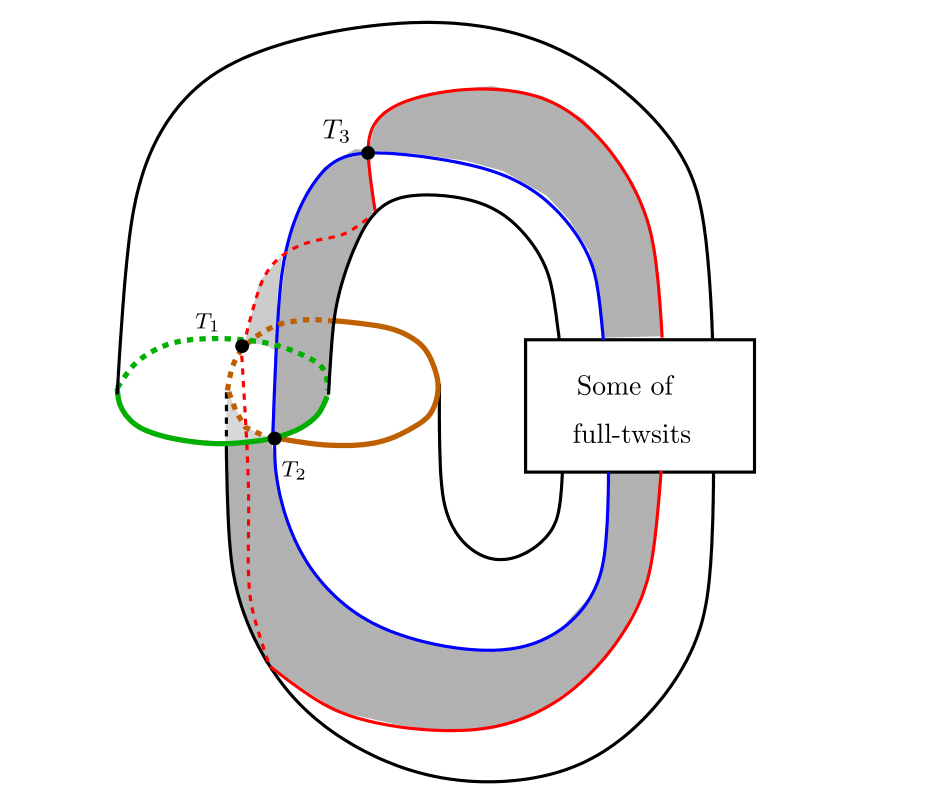}}
\caption{A possible projection of $D$ under the projection is depicted. 
Here, the shadow areas are the shaded triangles in Figure \ref{632}.}\label{633}  
\end{figure}
Therefore, in $\mathbb{R}^{3}$, the annulus determines the positions of $T_{1}$, $T_{2}$ $T_{3}$ up to 
homomorphism on the annulus. 
Also these triangles determine the double curves connecting $T_{3}$ and $\{T_{1}, T_{2}\}$.

Let $N$ be the closure of the image of a thin neighbourhood of $E_{1}$ under the projection.  
Following the double curves along the annulus, $N$ is uniquely determined up to homeomorphism of $N$. 
Once $N$ is determined, then the seven disks are uniquely determined up to isotopy in $\mathbb{R}^{3}$.
Then the position of $E_{2}$ is uniquely determined.
Therefore, the constructed diagram, if exists, is uniquely determined up to homeomorphism of the diagram. 

The surface-knot diagram containing the double point curves in Figure \ref{631} is depicted in
Figure \ref{634}. 

\begin{figure}[H]
\centering
\captionsetup{font=scriptsize}      
\mbox{\includegraphics[scale=0.5]{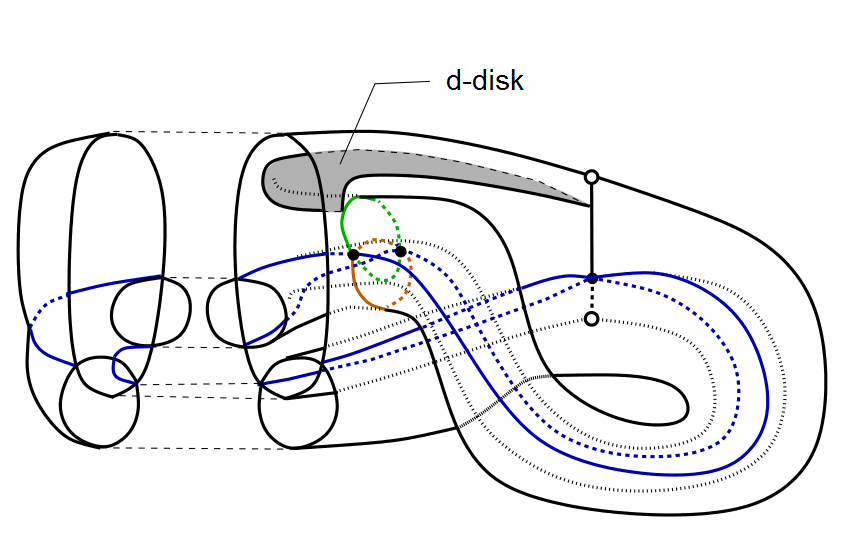}}
\caption{A possible surface-knot diagram corresponding to the double decker set in Figure \ref{632}.
The black dots are triple points. The white dots are branch points.}\label{634}
\end{figure}

There exists a descendent disk in the diagram (see Figure \ref{634}).
Applying the $R-7$ move along the descendent disk, one of the branch points joins to the $m/t$-branch
at $T_{2}$. Therefore, the triple point $T_{2}$ can be eliminated. 
This implies that the surface-knot diagram is not $t$-minimal.   

This completes the proof.

\end{proof}

\begin{lem}\label{c1}
There are at least two non-trivial double point circles in $\Delta$. 
\end{lem}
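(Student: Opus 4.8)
The plan is to exhibit the two required circles as the two double point circles passing through $T_{3}$, and to rule out the single way this could fail. First I would collect the structural information already available. By Lemma \ref{s} the only double edges that terminate at branch points are the two $b/t$-edges at $T_{3}$, and these together with $T_{3}$ constitute the arc $C$; every other double edge joins two triple points. Hence $C$ is the unique non-trivial double point arc, and each of the remaining eight strands — the $b/m$-, $b/t$- and $m/t$-strands at $T_{1}$ and at $T_{2}$, together with the $b/m$- and $m/t$-strands at $T_{3}$ — lies on a double point circle, which is automatically non-trivial because it runs through a triple point. In particular, write $C_{1}$ and $C_{2}$ for the circles carrying, respectively, the $b/m$-strand and the $m/t$-strand at $T_{3}$.

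If $C_{1}\neq C_{2}$, these are two distinct non-trivial circles and the lemma is proved, so the whole difficulty is to show that $C_{1}=C_{2}$ cannot occur in a $t$-minimal diagram. Equivalently, I would argue by contradiction: if $\Delta$ had only one non-trivial circle $\Gamma$, then $\Gamma$ would have to absorb all eight strands, so in particular $\Gamma=C_{1}=C_{2}$ and $\Gamma$ would pass through $T_{3}$ twice. I would then track how $\Gamma$ closes up by recording, for each of the four non-$b/t$ edges at $T_{3}$ and each of the four $b/t$-edges at $T_{1},T_{2}$, which $b/m$- or $m/t$-slot of which triple point it reaches. Table 1 limits these attachments, Lemma \ref{loop} forbids any edge that would form a double point loop, and Lemma \ref{descendent} forbids a $b/m$-edge from $T_{3}$ to a $b/m$-slot of $T_{k}$ occurring together with an $m/t$-edge from $T_{3}$ to an $m/t$-slot of the same $T_{k}$. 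Since the opposition rule pairs up the six edge-ends at each triple point in a fixed way, these constraints cut the possibilities down to a short finite list of matchings.

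The hard part will be verifying that each surviving matching forces $\Delta$ to be non-$t$-minimal. If the matching sends both $b/m$-edges and both $m/t$-edges at $T_{3}$ to $b/m$-slots of $T_{1}$ and $T_{2}$ in the pattern $e_{32}(b/m,b/m)\cup e_{23}(b/m,m/t)\cup e_{31}(m/t,b/m)\cup e_{13}(b/m,b/m)$, then the opposition rule closes these four edges into precisely the circle excluded by Lemma \ref{Tsukasa}; this both contradicts $t$-minimality and splits off as its own component, contrary to the single-circle assumption. For every other admissible matching I would use the homological set-up of Lemmas \ref{TT} and \ref{descendent} — the classes $[C^{U}\cup C^{L}]$ and $[C_{1_{1}}^{L}]$ generating $H_{1}(F)$ — to locate, along $\Gamma$, a sub-disk in $F$ bounded by an upper and a lower decker arc whose interior projection meets the decker set in at most one triple point; Lemma \ref{lo} then eliminates that triple point. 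Either way $\Delta$ is not $t$-minimal, a contradiction, so $C_{1}\neq C_{2}$ and $\Delta$ carries at least two non-trivial double point circles. The genuine obstacle, as throughout this section, is keeping the six edge-types at each triple point consistent with the opposition rule, so that the finite case check is exhaustive rather than merely representative.
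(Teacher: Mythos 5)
Your skeleton matches the paper's proof up to a point: assume a single non-trivial circle, observe that all eight double edges not ending in branch points must then lie on that one circle, and prune the possible matchings using Table 1, Lemma \ref{loop} and Lemma \ref{descendent}. (A small imprecision: the fact that the $b/m$- and $m/t$-edges cannot end at branch points comes from Lemma \ref{R6} together with $t$-minimality, not from Lemma \ref{s}, which only concerns $b/t$-edges.) The genuine gap is in the closing step, which is where all the work lies. You never carry out the enumeration; the paper shows that after the exclusions exactly one matching survives, namely the eight-edge circle
\[
C_1=e_{21}(m/t,b/t)\cup e_{12}(b/t,b/m)\cup e_{23}(b/m,m/t)\cup e_{31}(m/t,b/m)\cup e_{12}(b/m,b/t)\cup e_{21}(b/t,m/t)\cup e_{13}(m/t,b/m)\cup e_{32}(b/m,m/t),
\]
and, more importantly, the mechanism you propose for killing the surviving case does not work. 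You suggest locating a sub-disk bounded by an upper and a lower decker arc and then applying Lemma \ref{lo}; but Lemma \ref{lo} requires a double point \emph{loop} (a single edge that is $b/t$- and $m/t$-, or $b/m$-, at one and the same triple point) whose decker closure bounds the disk, and double point loops are precisely what Lemma \ref{loop} has already removed from every surviving matching, so its hypothesis can never be met here. The disk-between-two-triple-points lemma (Lemma \ref{c}) also does not apply, since its configuration is excluded by Lemma \ref{descendent}. Nothing in your outline produces a disk whose interior projection carries at most one triple point, and for the surviving circle no such disk is available.

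The paper's actual contradiction is not the elimination of a triple point at all but a homological one, and this is exactly where the genus-one hypothesis enters. For the surviving circle $C_1$, the closure of $\bigl(e_{13}(m/t,b/m)\bigr)^U\cup\bigl(e_{32}(b/m,m/t)\bigr)^U\cup\bigl(e_{21}(m/t,b/t)\bigr)^U$ and the closure of $\bigl(e_{12}(b/t,b/m)\bigr)^L\cup\bigl(e_{23}(b/m,m/t)\bigr)^L\cup\bigl(e_{31}(m/t,b/m)\bigr)^L$ are simple closed curves in $F$ meeting transversely in the single point $T_3^M$; hence they represent the two generators of $H_1(F)$. Since $C^U\cup C^L$ is disjoint from both, it is null-homologous in $F$, contradicting Lemma \ref{TT}. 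You do invoke ``the homological set-up of Lemma \ref{TT}'' in passing, but you attach it to the wrong tool (triple point elimination via Lemma \ref{lo}) rather than to this direct contradiction, so as written the argument does not close. Your handling of the Tsukasa configuration is correct but superfluous: under the single-circle assumption that four-edge circle cannot occur anyway, which is why the paper never cites Lemma \ref{Tsukasa} in this proof.
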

\begin{proof}
Suppose $\Delta$ has only one non-trivial double point circle. By excluding the cases of Lemma \ref{loop} and Lemma \ref{descendent}, we obtain only one possible double point circle that joins the double edges at the three triple points of $\Delta$. This double point circle is denoted by $C_1$ and can be described as follows:
\[
\begin{split}
C_1&=e_{21}(m/t,b/t) \cup  e_{12}(b/t,b/m) \cup e_{23}(b/m,m/t) \cup e_{31}(m/t,b/m) \\
&\quad \cup e_{12}(b/m,b/t) \cup e_{21}(b/t,m/t) \cup e_{13}(m/t,b/m) \cup e_{32}(b/m,m/t). 
\end{split}
\]
Consider the double decker set of $\Delta$. The closure of  $ \big( e_{13}(m/t,b/m)\big)^U \cup \big(e_{32}(b/m,m/t) \big)^U \cup  \big(e_{21}(m/t,b/t)\big)^U $ is a simple closed curve in $F$, let this curve be denoted by $\alpha$. Also the closure of $\big(e_{12}(b/t,b/m)\big)^L \cup \big( e_{23}(b/m,m/t)\big)^L \cup \big(e_{31}(m/t,b/m) \big)^L $ forms a simple closed curve, denoted by $\beta$. In particular, $\alpha$ and $\beta$ intersect transversely at  a single crossing point, $T_3^M$. It follows that $[\alpha]$ and $[\beta]$ represent the two distinct generators of the first homology group of $F$, $H_1(F)$. The simple closed curve $C^U \cup C^L$ does not meet any of the generators. Therefore, $C^U \cup C^L$ is homologous to zero in $F$, but this contradicts Lemma \ref{TT}.  
\end{proof}

\begin{lem}\label{circleb/m}
For $\Delta$, it is impossible to have a double point circle consisting of two double edges each of which is a $b/m$-edge at $T_2$ and $T_3$. 
\end{lem}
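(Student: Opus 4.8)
The plan is to assume such a circle exists, show that one of its two decker curves is null-homologous in the torus $F$, and then exhibit a \emph{closed} decker curve meeting that null-homologous curve transversally in exactly one point, which is impossible. First I would fix notation: write the hypothetical circle as $C_1=e_{23}(b/m,b/m)\cup e_{32}(b/m,b/m)$, so that both $b/m$-edges at $T_2$ and both $b/m$-edges at $T_3$ are consumed by $C_1$, and $C_1$ passes through exactly the two triple points $T_2$ and $T_3$. Since $C_1$ is then the double point circle containing the $b/m$-edges at $T_3$, it plays the role of the circle $C_1$ in Lemma \ref{TT}; its lower decker curve $C_1^L$, lying in the bottom sheets through $T_2^B$ and $T_3^B$, is already a simple closed curve, so $C_{1_1}^L=C_1^L$. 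Exactly as in the proof of Lemma \ref{TT}, the classes $[C^U\cup C^L]$ and $[C_1^L]$ are the two generators of $H_1(F)$ and meet once, at $T_3^B$.

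Next I would compute the class of the upper decker curve $C_1^U$, which (both edges being $b/m$) lies in the middle sheets and passes through $T_2^M$ and $T_3^M$. Two decker curves meet only at preimages of triple points, and the only such points lying on $C_1^U$ are $T_2^M$ and $T_3^M$; but $C^U\cup C^L$ runs through $T_3^T,T_3^B$ and $C_1^L$ through $T_2^B,T_3^B$. Hence $C_1^U$ is disjoint from both generating curves, so $[C_1^U]\cdot[C^U\cup C^L]=[C_1^U]\cdot[C_1^L]=0$; since the intersection form of the torus is unimodular, this forces $[C_1^U]=0$.

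The heart of the argument is to produce a closed decker curve crossing $C_1^U$ once. I would take the $m/t$-edges at $T_3$ and let $C_2$ be the double point circle containing them; as $C_2$ is an $m/t$-curve at $T_3$, its lower decker curve $C_2^L$ passes through $T_3^M$, where it crosses $C_1^U$ transversally. To see this is the \emph{only} crossing, note that the sole remaining triple-point preimage on $C_1^U$ is $T_2^M$, and $C_2^L$ reaches $T_2^M$ only if $C_2$ contains an $m/t$-edge at $T_2$. However, by Table 1 a $b/t$-edge at $T_1$ can only meet a $b/m$- or $m/t$-edge at $T_1$ or $T_2$; the options at $T_1$ are excluded as double point loops by Lemma \ref{loop}, and the $b/m$-edges at $T_2$ are already used by $C_1$, so every $b/t$-edge at $T_1$ must join an $m/t$-edge at $T_2$. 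Thus the double point circle $C_3$ through the $m/t$-edges at $T_2$ is a bigon meeting only $T_1$ and $T_2$; since $C_2$ passes through $T_3$ we get $C_3\neq C_2$, so $C_2$ carries no $m/t$-edge at $T_2$ and $C_2^L$ misses $T_2^M$. Finally, because $C_2$ contains no $b/t$-edge at $T_3$ it avoids the branch points, so $C_2^L$ is genuinely a closed curve.

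Putting this together, $C_2^L$ is a closed curve meeting the null-homologous curve $C_1^U$ transversally in the single point $T_3^M$, whence $[C_1^U]\cdot[C_2^L]=\pm1\neq0$, contradicting $[C_1^U]=0$. I expect the main obstacle to be precisely the bookkeeping of the third paragraph, namely confirming that the crossing number is \emph{odd}: the weaker fact that $C_1^U$ meets the entire $m/t$-lower decker set in an even number of points ($T_2^M$ and $T_3^M$) is automatic and yields no contradiction, so the whole force of the proof lies in showing, via Lemma \ref{loop} and Table 1, that $T_2^M$ and $T_3^M$ lie on two \emph{different} $m/t$-lower decker curves.
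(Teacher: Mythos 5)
Your proposal is correct, and it reaches the conclusion by a genuinely different organization than the paper. The paper fixes an $m/t$-edge $e$ at $T_3$ and runs a case analysis on its other endpoint (an $m/t$-edge at $T_2$, an $m/t$-edge at $T_1$, or a $b/m$-edge at $T_1$): the first case is excluded by Lemma \ref{descendent}, and in the remaining cases the connections are chased until an explicit second circle $C_2$ emerges whose lower decker curve crosses $C_1^U$ exactly once; the pair $\{[C_1^U],[C_2^L]\}$ then forms a basis of $H_1(F)$ that the curve $C^U\cup C^L$ misses, so $[C^U\cup C^L]=0$, contradicting Lemma \ref{TT}. You instead take the basis to be $\{[C^U\cup C^L],[C_1^L]\}$ (which is exactly Lemma \ref{TT}'s proof, specialized to the present $C_1$ whose full lower decker is already a simple closed curve), deduce $[C_1^U]=0$ from disjointness and nondegeneracy of the intersection form, and then contradict this with a single transverse crossing at $T_3^M$ between $C_1^U$ and the lower decker of the circle $C_2$ through the $m/t$-edges at $T_3$. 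The step that lets you dispense with the case analysis is your forcing argument: by Table 1 and Lemma \ref{loop}, and because both $b/m$-germs at $T_2$ are consumed by the hypothesized circle, each $b/t$-edge at $T_1$ must attach to an $m/t$-edge at $T_2$, so the $m/t$-edges at $T_2$ form the bigon $e_{12}(b/t,m/t)\cup e_{21}(m/t,b/t)$, which is disjoint from $C_2$; hence $C_2^L$ misses $T_2^M$ and the crossing count is exactly one, whatever $C_2$ looks like at $T_1$. (A pleasant by-product: your argument shows the paper's case (i) is vacuous, not merely contradictory.) What the paper's route buys is reuse of its descendent-disk machinery (Lemma \ref{descendent}) and explicit circle configurations of the kind that recur in the later lemmas, e.g.\ Lemma \ref{T1} and Lemma \ref{T3}; what your route buys is brevity, no appeal to Lemma \ref{descendent} at all, and a contradiction located in the homology of $C_1$ itself ($[C_1^U]=0$ versus $[C_1^U]\cdot[C_2^L]=\pm 1$) rather than in Lemma \ref{TT}. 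Both rest on the same two facts from the paper's framework: decker curves cross only at preimages of triple points, and two classes on the torus meeting transversally once generate $H_1(F)$.
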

\begin{proof}
Let  $C_1=e_{23}(b/m,b/m) \cup  e_{32}(b/m,b/m)$ be the double point circle joining the $b/m$-edges at $T_2$ with the $b/m$-edges at $T_3$. Let $e$ be a $m/t$-edge at $T_3$. From Table 1, $e$ can be connected to (i) a $m/t$-edge at $T_2$ or (ii) a $m/t$-edge at $T_1$ or (iii) a $b/m$-edge at $T_1$. We show that any of these cases is impossible by the following.
\begin{itemize}
\item[ Case (i)] The assumption of Lemma \ref{descendent} is satisfied. Thus this case cannot occur.
\item [Case (ii)] In this case $e$ is a $m/t$-edge at both $T_3$ and $T_1$. Let $e_1$ be the $m/t$-edge at $T_1$ that is opposite to $e$. From Lemma \ref{loop},  the other boundary point of $e_1$ cannot be $T_1$. In particular,  the other boundary point of $e_1$ can be either $T_3$ or $T_2$. In the former case,  $\Delta$ has a double point circle, denoted by $C_2$, such that $C_2$ is defined by 
 \begin{equation}
 \renewcommand{\theequation}{ii-1}
 C_2= e_{31}(m/t,m/t) \cup  e_{13}(m/t,m/t).
\end{equation}
Suppose the latter case where the other boundary point of $e_1$ is $T_2$. From Table 1, $e_1$ is a $b/t$-edge at $T_2$. Let $e_2$ be the $b/t$-edge at $T_2$ that is opposite to $e_1$. From Lemma \ref{loop} and Table 1, we see that the other endpoint of $e_2$ is a $b/m$-edge at $T_1$ and that the other $b/m$-edge at $T_1$ is joined to a $m/t$-edge at $T_3$. We obtain the double point circle $e_{31}(m/t,m/t) \cup e_{12}(m/t,b/t) \cup  e_{21}(b/t,b/m) \cup e_{13}(b/m,m/t)$. The remaining double edges at the triple points of $\Delta$ form the double point circle $C_2$ such that 
\begin{equation}
 \renewcommand{\theequation}{ii-2}
 C_2=e_{12}(b/t,m/t) \cup  e_{21}(m/t,b/t).
\end{equation}
In both cases (ii-1) and (ii-2) above, consider the double decker set of $\Delta$ in $F$. We see that the closed double decker curves $C_1^U$ and $C_2^L$ meet transversely at a single crossing point. It follows that $[C_1^U]$ and $[C_2^L]$ represent the two distinct generators of the first homology group of $F$. The closed curve $C^U \cup C^L$ does not meet any of the generators. Therefore, $C^U \cup C^L$ is homologous to zero in $F$ which contradicts Lemma \ref{TT}.
  \item[Case(iii)] By following the similar way we did in the previous case to connect the double edges at the triple points, we obtain that $\Delta$ attains a double point circle which can be described by 
\begin{equation}
 \renewcommand{\theequation}{iii-1}
 e_{31}(m/t,b/m) \cup e_{13}(b/m,m/t)     
 \end{equation}
  or by
  \begin{equation}
 \renewcommand{\theequation}{iii-2}
  e_{31}(m/t,b/m) \cup e_{12}(b/m,b/t) \cup e_{21}(b/t,m/t) \cup e_{13}(m/t,m/t).  
  \end{equation}
It is not hard to see that in both cases (iii-1) and (iii-2), $\Delta$ has a double point circle $C_2$ such that 
\[C_2= e_{12}(b/t,m/t) \cup  e_{21}(m/t,b/t).
\] The conclusion now follows from the sub-case (ii-2) above.  
     
\end{itemize}   
\end{proof}
\begin{lem} \label{b/m}
For $\Delta$, it is impossible to have a double point circle consisting of two double edges each of which is a $b/m$-edge at $T_1$ and $T_3$. 
\end{lem}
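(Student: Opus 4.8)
The plan is to reduce this lemma to Lemma \ref{circleb/m} by exploiting the symmetry between $T_1$ and $T_2$. By Lemma \ref{s}, the two triple points $T_1$ and $T_2$ are interchangeable: they carry the same Alexander numbering and the same sign, and the other endpoints of their $b/t$-edges are triple points, whereas $T_3$ is distinguished by having branch points at the far ends of its $b/t$-edges. Consequently the connection rules recorded in Table 1 --- which are phrased uniformly for the symbol $T_k \in \{T_1, T_2\}$ --- together with the structural lemmas of this section (Lemmas \ref{loop}, \ref{descendent} and \ref{TT}, as well as the distinguished arc $C$ at $T_3$) are all invariant under the relabeling that transposes $T_1$ and $T_2$ and fixes $T_3$.

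First I would suppose, for contradiction, that $\Delta$ contains a double point circle $C_1 = e_{13}(b/m,b/m) \cup e_{31}(b/m,b/m)$ formed by the two $b/m$-edges at $T_1$ and the two $b/m$-edges at $T_3$. Applying the transposition $T_1 \leftrightarrow T_2$ turns this hypothesis into the existence of the double point circle $e_{23}(b/m,b/m) \cup e_{32}(b/m,b/m)$, which is precisely the configuration treated in Lemma \ref{circleb/m}. Since every hypothesis imposed on $\Delta$ in Section \ref{section} is symmetric in $T_1$ and $T_2$, the relabeled diagram still satisfies all of them, so Lemma \ref{circleb/m} applies and shows that $\Delta$ is not $t$-minimal, a contradiction. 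Hence no such circle can exist.

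For completeness one could instead rerun the case analysis of Lemma \ref{circleb/m} directly: since both $b/m$-edges at $T_1$ are now consumed by $C_1$, a $m/t$-edge at $T_3$ must connect (by Table 1) to a $m/t$-edge at $T_1$, a $m/t$-edge at $T_2$, or a $b/m$-edge at $T_2$, and each branch produces a double point circle $C_2$ whose lower decker curve $C_2^L$ meets $C_1^U$ transversely in a single point; then $[C_1^U]$ and $[C_2^L]$ generate $H_1(F)$, forcing $C^U \cup C^L$ to be null-homologous and contradicting Lemma \ref{TT}. I expect the only point needing care --- and hence the main obstacle --- to be verifying that the $T_1 \leftrightarrow T_2$ symmetry is genuinely respected by all the oriented data (the BW orientation, the signs, and the Alexander indices), which is guaranteed here by the equalities $\lambda(T_1) = \lambda(T_2)$ and $\epsilon(T_1) = \epsilon(T_2)$ supplied by Lemma \ref{s}.
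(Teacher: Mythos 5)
Your proposal is correct and takes essentially the same route as the paper, whose entire proof is the observation that since $T_1$ and $T_2$ have the same Alexander numbering and sign, the argument of Lemma \ref{circleb/m} carries over with the roles of $T_1$ and $T_2$ exchanged. Your write-up simply makes this $T_1 \leftrightarrow T_2$ symmetry explicit and verifies that all the hypotheses and auxiliary lemmas are invariant under the relabeling.
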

\begin{proof}
Since $T_1$ and $T_2$ have same Alexander Numbering and signs, the proof of this lemma is analogous to the proof of the previous lemma.
\end{proof}
\begin{lem}\label{T1}
Let $e_1$ and $e_2$ be the $b/m$-edges at $T_2$ in $\Delta$. Suppose the other endpoint of $e_1$ is the triple point $T_3$ such that $e_1$ is a $b/m$-edge at $T_3$. The connection of double edges in $\Delta$ in which the other endpoint of $e_2$ is the triple point $T_1$ can not occur.
\end{lem}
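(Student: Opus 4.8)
The plan is to assume the forbidden connection occurs, to trace the double point curve through $e_1$ and $e_2$ until it is completely forced, and then to eliminate a triple point by a Roseman move, contradicting the $t$-minimality of $\Delta$. First I would record the local possibilities. If the other endpoint of $e_2$ were $T_1$, then by Table 1 the only admissible type for a $b/m$-edge at $T_2$ arriving at $T_1$ is a $b/t$-edge, so $e_2 = e_{21}(b/m, b/t)$ while $e_1 = e_{23}(b/m, b/m)$. I then follow the double point curve through these edges: at $T_1$ it continues along the opposite $b/t$-edge $e_3$, and Lemma \ref{s} together with Lemma \ref{loop} forces the other endpoint of $e_3$ to be $T_2$; since both $b/m$-edges at $T_2$ are already occupied by $e_1$ and $e_2$, Table 1 leaves only $e_3 = e_{12}(b/t, m/t)$. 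Continuing along the opposite $m/t$-edge $e_4$ at $T_2$, the two $b/t$-edges at $T_1$ are already used (by $e_2$ and $e_3$), so $e_4$ must reach $T_3$, and there are exactly two possibilities, $e_4 = e_{23}(m/t, m/t)$ or $e_4 = e_{23}(m/t, b/m)$.

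The first possibility is excluded immediately: together $e_1 = e_{23}(b/m, b/m)$ and $e_4 = e_{23}(m/t, m/t)$ satisfy the hypothesis of Lemma \ref{descendent} with $T_k = T_2$, which is impossible. Hence $e_4 = e_{23}(m/t, b/m)$, and the curve closes up through the $b/m$-curve at $T_3$ into the single double point circle
\[
C_1 = e_{23}(b/m, b/m) \cup e_{21}(b/m, b/t) \cup e_{12}(b/t, m/t) \cup e_{23}(m/t, b/m),
\]
which passes through $T_3$ once, through $T_1$ once and through $T_2$ twice (an even number of triple points, consistent with Lemma \ref{even}).

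At this point I would pass to the double decker set. A direct computation of the upper and lower decker curves of $C_1$ shows that $C_1^U$ (running $T_3^M\to T_2^M\to T_1^T\to T_2^T\to T_3^M$) and $C_1^L$ (running $T_3^B\to T_2^B\to T_1^B\to T_2^M\to T_3^B$) are simple closed curves meeting transversally in the single point $T_2^M$, so $[C_1^U]$ and $[C_1^L]$ generate $H_1(F)$. I would note, however, that $C^U\cup C^L$ is disjoint from $C_1^U$ but meets $C_1^L$ transversally once (at $T_3^B$), so $[C^U\cup C^L]=\pm[C_1^U]$ is a nonzero class; thus the homological contradiction with Lemma \ref{TT} that settled the previous lemmas is \emph{not} available here. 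The decisive step must therefore be geometric, exactly as in Lemma \ref{Tsukasa}: reconstruct the surface-knot diagram carrying $C_1$ and the arc $C$ on the torus, exhibit a descendent disk, and apply the move $R-7$ to slide one of the branch points at the ends of the $b/t$-edges of $T_3$ onto a $b/m$- or $m/t$-edge at one of the triple points; by Lemma \ref{R6} that triple point is then eliminated, so $\Delta$ is not $t$-minimal, a contradiction.

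The main obstacle is precisely this last step: since the homology computation rules out any shortcut through Lemma \ref{TT}, I must pin down the complementary regions of the reconstructed diagram well enough to locate an explicit descendent disk, which requires carrying out the delicate decker-set-to-diagram construction of Lemma \ref{Tsukasa} in this new configuration.
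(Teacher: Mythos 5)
Your edge-tracing matches the paper's proof exactly up to the formation of the circle $C_1$: the forced sequence $e_2=e_{21}(b/m,b/t)$, $e_3=e_{12}(b/t,m/t)$, $e_4$ ending at $T_3$, the exclusion of $e_4=e_{23}(m/t,m/t)$ by Lemma \ref{descendent}, and the resulting circle are all as in the paper. Your decker computation for $C_1$ is also correct as far as it goes: $C_1^U$ and $C_1^L$ meet once at $T_2^M$, and $C^U\cup C^L$ meets $C_1^L$ once at $T_3^B$, so $[C^U\cup C^L]=\pm[C_1^U]\neq 0$ and the pair $(C_1^U,C_1^L)$ alone yields no contradiction. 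But your conclusion that the homological route through Lemma \ref{TT} is therefore ``not available'' is mistaken, and this is where your proof has a genuine gap: you replace it with an unexecuted program (reconstruct the diagram as in Lemma \ref{Tsukasa}, locate a descendent disk, apply $R$-$7$), which you yourself identify as the main obstacle. That step is never carried out, so the argument is incomplete --- and, as it happens, unnecessary.

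What you are missing is that the \emph{remaining} double edges must also close up, and they supply the second generator. Once $C_1$ is formed, the unused edge-ends are the two $m/t$-edges at $T_3$, the two $b/t$-edges at $T_2$, and the four $b/m$- and $m/t$-edges at $T_1$ (none can end at a branch point, by Lemmas \ref{R6} and \ref{s}). By Table 1 and Lemma \ref{loop} they close up either into a single four-edge circle
\[
C_2=e_{31}(m/t,m/t)\cup e_{12}(m/t,b/t)\cup e_{21}(b/t,b/m)\cup e_{13}(b/m,m/t),
\]
or into two two-edge circles, in exactly two ways: $e_{31}(m/t,b/m)\cup e_{13}(b/m,m/t)$ together with $e_{21}(b/t,m/t)\cup e_{12}(m/t,b/t)$, or $e_{31}(m/t,m/t)\cup e_{13}(m/t,m/t)$ together with $e_{21}(b/t,b/m)\cup e_{12}(b/m,b/t)$. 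In every case, let $C_2$ be the circle containing the $m/t$-edges at $T_3$. Its lower decker curve $C_2^L$ passes through $T_3^M$ (the middle sheet is the lower sheet of an $m/t$-edge), while $C_1^U$ also passes through $T_3^M$ (the middle sheet is the upper sheet of a $b/m$-edge), and one checks from the lists of triple-point preimages on each curve that $T_3^M$ is their \emph{only} common point, where they cross transversally. Hence $[C_1^U]$ and $[C_2^L]$ generate $H_1(F)$. Since $C^U\cup C^L$ passes only through $T_3^T$ and $T_3^B$ among triple-point preimages, it is disjoint from both generators, forcing $[C^U\cup C^L]=0$, which contradicts Lemma \ref{TT}. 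This is precisely how the paper finishes, with its cases (i), (ii-1), (ii-2) corresponding to the three closures above; no diagram reconstruction or descendent disk is needed.
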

\begin{proof}
Suppose for the sake of a contradiction that the other endpoint of $e_2$ is $T_1$. From Table 1, $e_2$ is a $b/t$-edge at $T_1$.
Let $e_3$ be the $b/t$-edge at $T_1$ that is opposite to $e_2$. From Lemma \ref{loop} and Table 1, $e_3$ is a $m/t$-edge at $T_2$. Let $e_4$ be the $m/t$-edge at $T_2$ that is opposite to $e_3$. The other endpoint of $e_4$ is $T_3$ by Table 1 and Lemma \ref{loop}. If $e_4$ is a $m/t$-edge at $T_3$, then the assumption of Lemma \ref{descendent} is satisfied and so we get a contradiction. On the other hand if $e_4$ is a $b/m$-edge at $T_3$, the double point circle $C_1$ is obtained such that 
\[
  C_1= e_{32}(b/m,b/m) \cup e_{21}(b/m,b/t) \cup e_{12}(b/t,m/t) \cup e_{23}(m/t,b/m). 
  \] 
Consider the cases below.
  \begin{itemize}
  \item[(i)] $\Delta$ has exactly two non-trivial double point circles. In this case, the other non-trivial double point circle, denoted by $C_2$, can be described as 
  \[ 
  C_2= e_{31}(m/t,m/t) \cup e_{12}(m/t,b/t) \cup e_{21}(b/t,b/m) \cup e_{13}(b/m,m/t).
  \]
In the double decker set of $\Delta$, the simple closed  curves $C_2^L$ and $C_1^U$ meet at exactly one transverse crossing point in $F$. Thus $[C_2^L]$ and $[C_1^U]$ represent the two distinct generators of the first homology group of $F$, $H_1(F)$. But the curve $C^U \cup C^L$ does not intersect any of the generators which implies that $[C^U \cup C^L]=[0]$ in $H_1(F)$. This contradicts Lemma \ref{TT}.  
   
  \item[(ii)] $\Delta$ has more than two non-trivial double point circles. Then with $C_1$, $\Delta$ has two more double point circles, $C_2$ and $C_3$. In particular, $C_2$ and $C_3$ can be defined by
  \begin{equation} 
\begin{array}{ll} 
 C_2= e_{31}(m/t,b/m) \cup e_{13}(b/m,m/t), \\
 C_3=  e_{21}(b/t,m/t) \cup e_{12}(m/t,b/t).
 \end{array} \
 \renewcommand{\theequation}{ii-1}
\end{equation}
  Or by 
  \begin{equation} 
\begin{array}{ll} 
 C_2= e_{31}(m/t,m/t) \cup e_{13}(m/t,m/t), \\
 C_3=  e_{21}(b/t,b/m) \cup e_{12}(b/m,b/t).
 \end{array} \
  \renewcommand{\theequation}{ii-2}
\end{equation}
In both cases (ii-1) and (ii-2), the simple closed  curves $C_2^L$ and $C_1^U$ meet at exactly one transverse crossing point in $F$. Thus $[C_2^L]$ and $[C_1^U]$ represent the two distinct generators of the first homology group of $F$, $H_1(F)$. But the  curve $C^U \cup C^L$ does not intersect any of the generators which implies that $[C^U \cup C^L]=[0]$ in $H_1(F)$. This contradicts Lemma \ref{TT}. 
 \end{itemize}

\end{proof}
\begin{lem}\label{T3}
Let $e_1$ and $e_2$ be the $b/m$-edges at $T_2$ in $\Delta$. Suppose the other endpoint of $e_1$ is the triple point $T_3$ such that $e_1$ is a $b/m$-edge at $T_3$. The connection of double edges in $\Delta$ in which the other endpoint of $e_2$ is the triple point $T_3$ can not occur.
\end{lem}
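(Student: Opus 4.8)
\section*{Proof proposal for Lemma \ref{T3}}

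The plan is to argue by contradiction: assume the other endpoint of $e_2$ is the triple point $T_3$. Since $e_2$ is a $b/m$-edge at $T_2$, Table 1 tells us that its far end at $T_3$ is either a $b/m$-edge or an $m/t$-edge at $T_3$, so I would split the argument into these two cases. The first is disposed of by reduction to an already forbidden configuration, while the second is the substantial one and will be handled by the homological method of Lemmas \ref{TT}, \ref{T1} and \ref{c1}.

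In the first case, where $e_2$ is a $b/m$-edge at $T_3$, I would use the fact that $T_2$ and $T_3$ each carry exactly two $b/m$-edges. Thus $e_1$ and $e_2$ occupy both $b/m$-slots at $T_2$ and both $b/m$-slots at $T_3$. The opposition relation through $T_2$ joins $e_1$ to $e_2$, and the opposition relation through $T_3$ joins them once more, so $e_1\cup e_2$ closes up into the double point circle $e_{23}(b/m,b/m)\cup e_{32}(b/m,b/m)$, whose two edges are each a $b/m$-edge at both $T_2$ and $T_3$. This is exactly the configuration excluded by Lemma \ref{circleb/m}, so this case cannot occur.

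In the second case, where $e_2$ is an $m/t$-edge at $T_3$, I would first pin down the forced connections. Both $b/m$-slots at $T_2$ are now spent on $e_1$ and $e_2$, so neither the edge $f$ opposite $e_1$ at $T_3$ (the second $b/m$-edge at $T_3$) nor the edge $g$ opposite $e_2$ at $T_3$ (the second $m/t$-edge at $T_3$) can return to a $b/m$-slot of $T_2$. Moreover, applying Lemma \ref{descendent} to the $b/m$--$b/m$ edge $e_1$ between $T_3$ and $T_2$ rules out any $m/t$--$m/t$ edge between $T_3$ and $T_2$, so $g$ cannot be $e_{32}(m/t,m/t)$; combined with the slot count this forces $g$ onto $T_1$, while $f$ is constrained to land on the remaining $m/t$-slot of $T_2$ or on $T_1$. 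Using Table 1 together with Lemma \ref{loop} (no double point loops) and Lemma \ref{s} (the $b/t$-edges at $T_3$ end at branch points, those at $T_1,T_2$ at triple points), I would then trace the remaining edges at $T_1$ and $T_2$ to completion; this branches into a short list of sub-configurations indexed by the destinations of $f$ and $g$, with the degenerate ones again absorbed by Lemmas \ref{circleb/m} and \ref{descendent}.

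For each surviving configuration the conclusion is reached uniformly. Let $C_1$ be the double point circle through the $b/m$-edges at $T_3$ and $C_2$ the double point circle through the $m/t$-edges at $T_3$. As in Lemmas \ref{TT} and \ref{c1}, I would assemble from their decker arcs a simple closed curve $\alpha$ out of upper decker arcs and a simple closed curve $\beta$ out of lower decker arcs so that $\alpha$ and $\beta$ cross transversally exactly once, at $T_3^M$, the middle-sheet preimage of $T_3$ where the $b/m$ upper decker of $C_1$ meets the $m/t$ lower decker of $C_2$. Then $[\alpha]$ and $[\beta]$ are a pair of generators of $H_1(F)\cong\mathbb{Z}^2$. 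Since $C^U$ lies in the top sheet and $C^L$ in the bottom sheet, whereas the organising crossing of $\alpha$ and $\beta$ sits in the middle sheet, the curve $C^U\cup C^L$ meets neither generator, so $[C^U\cup C^L]=0$ in $H_1(F)$, contradicting Lemma \ref{TT}. I expect the main obstacle to be precisely this last verification within the tracing step: confirming that in each sub-configuration the decker arcs of $C_1$ and $C_2$ genuinely close into simple closed curves meeting only once at $T_3^M$, and that no stray intersection of $C^U\cup C^L$ with the chosen generators is created at $T_1^M$ or $T_2^M$. The bookkeeping of which sheet each decker arc occupies is where the care is needed.
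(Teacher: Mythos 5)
Your skeleton matches the paper's: split on whether $e_2$ is a $b/m$- or $m/t$-edge at $T_3$, kill the first case with Lemma \ref{circleb/m}, then trace the remaining connections with Table 1, Lemma \ref{loop} and Lemma \ref{descendent}, and finish homologically against Lemma \ref{TT}. But there is a genuine gap in the second case. When the edge $g$ opposite $e_2$ at $T_3$ lands on $T_1$ as a $b/m$-edge, the $b/m$-edge at $T_1$ opposite $g$ may return to $T_3$ as a $b/m$-edge (your $f$), closing the four edges into the circle
\[
e_{32}(b/m,b/m)\cup e_{23}(b/m,m/t)\cup e_{31}(m/t,b/m)\cup e_{13}(b/m,b/m).
\]
None of the lemmas you invoke excludes this configuration: Lemma \ref{circleb/m} (and its mirror, Lemma \ref{b/m}) only forbids the two-edge $b/m$--$b/m$ circle; Lemma \ref{descendent} requires an $m/t$--$m/t$ edge paired with a $b/m$--$b/m$ edge between the same two triple points, which is absent here; and no double point loop occurs. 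The paper excludes exactly this configuration by Lemma \ref{Tsukasa}, whose proof is not homological at all: one constructs the essentially unique diagram realizing this circle, finds a descendent disk, and applies the move $R-7$ to eliminate $T_2$. Your proposal never cites Lemma \ref{Tsukasa}, and without it this sub-case survives.

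Worse, your uniform homological finish provably fails on this very configuration, in the way you yourself flagged as the "main obstacle." Here the circle through the $b/m$-edges at $T_3$ and the circle through the $m/t$-edges at $T_3$ coincide; its upper decker curve $\alpha$ runs through $T_3^M$, $T_2^M$, $T_3^T$, $T_1^M$ and its lower decker curve $\beta$ through $T_3^M$, $T_2^B$, $T_3^B$, $T_1^B$, and they do cross transversally exactly once, at $T_3^M$, so they are indeed a pair of generators of $H_1(F)$. But $\alpha$ passes through $T_3^T$ and $\beta$ through $T_3^B$, and these are precisely the two triple-point preimages lying on $C^U\cup C^L$; hence $C^U\cup C^L$ crosses each generator once, is not null-homologous, and no contradiction with Lemma \ref{TT} arises. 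Your premise that $C^U$ "lies in the top sheet" while the organising crossing "sits in the middle sheet" is the error: decker curves wander through preimages in all three sheets, and the stray intersections occur at $T_3^T$ and $T_3^B$, not at $T_1^M$ or $T_2^M$ where you proposed to look. This is also why, in its other sub-cases, the paper takes care to choose generators (for instance $C_2^U$ and $C_3^U$, built entirely from edges joining $T_1$ and $T_2$) that avoid the preimages of $T_3$ altogether, rather than the fixed recipe you propose.
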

\begin{proof}
Assume that the other endpoint of $e_2$ is $T_3$. From Lemma \ref{circleb/m}, $e_2$ cannot be a $b/m$-edge at $T_3$. In particular,  $e_2$ is a $m/t$-edge at $T_3$.
Let $e_3$ be a $m/t$-edge at $T_3$ that is opposite to $e_2$. From Lemma \ref{descendent}, the other boundary point of $e_3$ cannot be $T_2$. From Table 1, $e_3$ is either (i) a $m/t$-edge at $T_1$ or (ii) a $b/m$-edge at $T_1$.
\begin{itemize}
\item[Case (i)]  Let $e_4$ be a $m/t$-edge at $T_1$ that is opposite to $e_3$. If the other boundary point of $e_4$ is $T_1$ then we obtain a double point loop based at $T_1$ in $ \Delta$. Such a case cannot happen by Lemma \ref{loop}. Also the other endpoint of $e_4$ cannot be $T_2$ (For if the other endpoint of $e_4$ is $T_2$, then $e_4$ is a $b/t$-edge at $T_2$ by Table 1. Let $e_5$ be the $b/t$-edge at $T_2$ that is opposite to $e_4$. In particular, $e_5$ is a $b/m$-edge at $T_1$. Let $e_6$ be the $b/m$-edge at $T_1$ that is opposite to $e_5$. If the other endpoint of $e_6$ is $T_3$, then the assumption of Lemma \ref{descendent} between $T_3$ and $T_1$ is satisfied and so we get a contradiction. Hence the other endpoint of $e_6$ is $T_1$. But in such a case we get a double point loop and so this case is also impossible by Lemma \ref{loop}). We obtain that the other boundary point of $e_4$ is $T_3$ where $e_4$ is a $b/m$-edge at $T_3$. A double point circle $C_1$ is obtained such that
\[C_1= e_{32}(b/m,b/m) \cup e_{23}(b/m,m/t) \cup e_{31}(m/t,m/t) \cup e_{13}(m/t,b/m). 
\]
The remaining double edges that are coming to the triple points of $\Delta$ form two double point circles, $C_2$ and $C_3$ such that 
\begin{equation} 
\begin{array}{ll} 
 C_2= e_{12}(b/m,b/t) \cup e_{21}(b/t,b/m),\\
 C_3=  e_{12}(b/t,m/t) \cup e_{21}(m/t,b/t).
\end{array} \
 \nonumber
\end{equation}
We show that this connection cannot occur by the following. The simple closed curves $C_2^U$ and $C_3^U$ meet at exactly one transverse crossing point in $F$. Thus $[C_2^U]$ and $[C_3^U]$ represent the two distinct generators of the first homology group of $F$, $H_1(F)$. But the  curve $C^U \cup C^L$ does not intersect any of the generators which implies that $[C^U \cup C^L]=[0]$ in $H_1(F)$. This contradicts Lemma \ref{TT}. 
\item[Case (ii)] Assume that $e_3$ is a $b/m$-edge at $T_1$. Let $e_4$ be the $b/m$-edge at $T_1$ that is opposite to $e_3$. From Lemma \ref{loop} and Table 1, we have two possibilities for the other endpoint of $e_4$: Either $e_4$ is a $b/m$-edge at $T_3$ or $e_4$ is a $b/t$-edge at $T_2$. The former case cannot happen by Lemme \ref{Tsukasa}. Consider the latter case. By avoiding the case in Lemma \ref{loop}, we obtain a double point circle $C_1$ in $\Delta$ which can be described by 
\[
\begin{split}
C_1&=e_{32}(b/m,b/m) \cup e_{23}(b/m,m/t) \cup e_{31}(m/t,b/m) \\
&\quad \cup e_{12}(b/m,b/t) \cup e_{21}(b/t,m/t)\cup e_{13}(m/t,b/m) .  
\end{split}
\]
 So, the remaining double edges at $T_1$ and $T_2$ form a double point circle, $C_2$, such that
 \[C_2= e_{12}(b/t,m/t) \cup e_{21}(m/t,b/t).\]
In double decker set of $\Delta$, the closure of $\big(e_{23}(b/m,m/t)\big)^L \cup \big(e_{31}(m/t,b/m)\big)^L \cup \big(e_{12}(b/m,b/t)\big)^L $ is a simple closed curve in $F$ that meets the simple closed lower decker curve $C_2^L$ at a single transverse crossing point. Therefore, these two curves are not homologous to zero and in fact they represent the two distinct generators of the first homology group of $F$, $H_1(F)$. The curve $C^U \cup C^L$ has empty intersection with these two generators and thus $C^U \cup C^L$ is homologous to zero. This contradicts Lemma \ref{TT}.

\end{itemize} 
\end{proof}
\begin{prop}\label{b/mb/m}
For $\Delta$, it is impossible to have a double edge $e$ such that $e$ is a $b/m$-edge at both $T_3$ and $T_2$ (resp. $T_1$).
\end{prop}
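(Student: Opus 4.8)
The plan is to argue by contradiction and reduce the whole statement to the two preceding lemmas, Lemma \ref{T1} and Lemma \ref{T3}, which already carry out the hard homological bookkeeping. Since $T_1$ and $T_2$ play symmetric roles (they share the same Alexander numbering and sign by Lemma \ref{s}), it suffices to rule out a double edge $e$ that is a $b/m$-edge at both $T_3$ and $T_2$; the parenthetical case of $T_1$ then follows verbatim after relabelling $T_1 \leftrightarrow T_2$.

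So I would suppose, for contradiction, that such an edge $e$ exists. Rename $e$ as $e_1$ and let $e_2$ be the second $b/m$-edge at $T_2$. Then $e_1$ and $e_2$ are exactly the two $b/m$-edges at $T_2$, and the other endpoint of $e_1$ is $T_3$ with $e_1$ a $b/m$-edge at $T_3$. This is precisely the standing hypothesis of both Lemma \ref{T1} and Lemma \ref{T3}, so both lemmas become applicable to the present configuration.

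The next step is to pin down where the other endpoint of $e_2$ can lie. Since $e_2$ is a $b/m$-edge at $T_2 = T_k$, Table 1 leaves only the connections $(T_k)_{b/t}$, $(T_3)_{b/m}$, and $(T_3)_{m/t}$; moreover $e_2$ cannot end at a branch point, since by Lemma \ref{R6} a $b/m$-edge running into a branch point would allow $T_2$ to be eliminated, contradicting $t$-minimality (consistently, Table 1 lists no branch-point entry in this slot). Hence the other endpoint of $e_2$ is one of $T_1$, $T_2$, or $T_3$: it is $T_1$ when $e_2$ meets a $b/t$-edge at $T_1$, it is $T_2$ when $e_2$ meets a $b/t$-edge at $T_2$, and it is $T_3$ when $e_2$ meets a $b/m$- or $m/t$-edge at $T_3$.

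Finally I would dispose of the three cases in turn. If the other endpoint of $e_2$ is $T_2$, then $e_2$ is simultaneously a $b/m$- and a $b/t$-edge at $T_2$, i.e. a double point loop, which is excluded by Lemma \ref{loop}. If it is $T_1$, this is exactly the connection forbidden by Lemma \ref{T1}. If it is $T_3$ (whether as a $b/m$- or an $m/t$-edge), it is the connection forbidden by Lemma \ref{T3}. Since no placement of the endpoint of $e_2$ survives, the edge $e$ cannot exist, which proves the proposition. I do not anticipate a genuine obstacle here, as the substantive work is already contained in Lemmas \ref{T1} and \ref{T3}; the only points requiring care are that the reading of Table 1, with $T_k$ standing for either $T_1$ or $T_2$, makes the three-way split genuinely exhaustive, and that the $T_2$ case is correctly recognized as the $b/t$-/$b/m$-type double point loop that Lemma \ref{loop} covers.
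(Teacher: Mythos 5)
Your proposal is correct and follows essentially the same route as the paper: both arguments fix the second $b/m$-edge at $T_2$, use Table 1 to enumerate its possible other endpoint, and dispose of the cases via Lemma \ref{loop} (loop at $T_2$), Lemma \ref{T1} (endpoint $T_1$), Lemma \ref{T3} (endpoint $T_3$), and Lemma \ref{R6} (branch point), finishing by the $T_1$/$T_2$ symmetry from Lemma \ref{s}. Your write-up is merely more explicit than the paper's about the exhaustiveness of the case split and the branch-point exclusion, which the paper compresses into a single sentence.
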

\begin{proof}
Suppose $e$ is a $b/m$-edge at both $T_3$ and $T_2$. Let $e_1$ be the $b/m$-edge at $T_2$ that is opposite to $e$. Lemmas \ref{loop}, \ref{T1} and \ref{T3} imply that the other endpoint of $e_1$ cannot be $T_1$ or $T_2$ or $T_3$. Since $\Delta$ has only three triple points and by Lemma \ref{R6}, we conclude that this connection is impossible. Since $T_1$ and $T_2$ have same Alexander numbering and signs, the argument used above is also valid if $e$ is a $b/m$-edge at both $T_3$ and $T_1$. The proposition follows.   
\end{proof}
\begin{cor}\label{m/tm/t}
For $\Delta$, it is impossible to have a double edge $e$ such that $e$ is a $m/t$-edge at both $T_3$ and $T_2$ (resp. $T_1$).
\end{cor}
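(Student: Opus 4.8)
The plan is to deduce the corollary from Proposition \ref{b/mb/m} by exploiting the top--bottom symmetry of surface-knot diagrams, rather than re-running the entire case analysis. Concretely, I would consider the surface-knot $F^{*}$ obtained from $F$ by the reflection $(x_1,x_2,x_3,x_4)\mapsto(x_1,x_2,x_3,-x_4)$ of $\mathbb{R}^4$. This reflection reverses the height coordinate that records the over/under information, so under the projection to $\mathbb{R}^3$ it carries the diagram $\Delta$ to a diagram $\Delta^{*}$ with the same underlying image but with the top and bottom sheets interchanged at every triple point. Consequently a $b/m$-edge of $\Delta$ becomes an $m/t$-edge of $\Delta^{*}$ and vice versa, while every $b/t$-edge remains a $b/t$-edge and every branch point remains a branch point.

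Next I would verify that $\Delta^{*}$ satisfies all the standing hypotheses of Section \ref{section}. Reflection is a diffeomorphism of $\mathbb{R}^4$, so $F^{*}$ is again a genus-one surface-knot, triple points are carried to triple points, and $t(F^{*})=t(F)=3$; hence $\Delta^{*}$ is $t$-minimal with three triple points $T_1^{*},T_2^{*},T_3^{*}$. Because $b/t$-edges and branch points are preserved, the triple point $T_3^{*}$ is still exactly the one whose $b/t$-edges terminate at branch points, so the labelling supplied by Lemma \ref{s} is respected; the reflection at most negates all signs and shifts (or negates) all Alexander numberings simultaneously, so the relations $\epsilon(T_1^{*})=\epsilon(T_2^{*})=-\epsilon(T_3^{*})$ and $\lambda(T_1^{*})=\lambda(T_2^{*})=\lambda(T_3^{*})$ continue to hold. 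Finally, the unique non-trivial double point arc $C$ of $\Delta$ is built from $b/t$-edges at $T_3$, so it is unchanged by the reflection and remains the unique non-trivial arc $C^{*}$ of $\Delta^{*}$ at $T_3^{*}$.

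With these verifications in hand the corollary follows at once. Applying Proposition \ref{b/mb/m} to $\Delta^{*}$ shows that $\Delta^{*}$ has no double edge that is a $b/m$-edge at both $T_3^{*}$ and $T_2^{*}$ (resp.\ $T_1^{*}$). Pulling this statement back through the reflection, and using that the reflection interchanges $b/m$- and $m/t$-edges, we conclude that $\Delta$ has no double edge that is an $m/t$-edge at both $T_3$ and $T_2$ (resp.\ $T_1$), which is precisely the assertion. Note that no separate $m/t$-analogues of Lemmas \ref{circleb/m}, \ref{T1} and \ref{T3} are required, and that Lemma \ref{R6}, which already treats $b/m$- and $m/t$-edges on an equal footing, needs no modification.

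I expect the only genuine obstacle to be the bookkeeping in the second paragraph: one must confirm that the reflection really preserves the full package of hypotheses collected in Section \ref{section}, in particular that the distinguished role of $T_3$ (its $b/t$-edges ending at branch points) and the homological role of the arc $C$ in Lemma \ref{TT} survive, and that the orientation reversal does not disturb the equalities among the signs and the Alexander numberings. Once this invariance is checked, the symmetry argument yields the corollary immediately.
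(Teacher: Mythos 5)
Your proof is correct and takes essentially the same approach as the paper: the paper's own proof likewise passes to the mirror image of $\Delta$ (which interchanges $b/m$- and $m/t$-edges while preserving $b/t$-edges and branch points) and then invokes Proposition \ref{b/mb/m} to obtain a contradiction. Your version merely makes explicit the verification, left implicit in the paper, that the mirrored diagram still satisfies the standing hypotheses of Section \ref{section} (t-minimality, the sign and Alexander numbering relations, and the distinguished role of $T_3$ and the arc $C$).
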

\begin{proof}
Let $e$ be a double edge in $\Delta$ such that $e$ is a $m/t$-edge at both $T_3$ and $T_2$. Let $\Delta_1$ be the mirror image of $\Delta$. Since $\Delta$ is a $t$-minimal diagram, so does $\Delta_1$. In fact, $\Delta_1$ has a double edge that is a $b/m$-edge at both $T_3$ and $T_2$ (that is the mirror image of $e$). By Proposition \ref{b/mb/m}, $\Delta_1$ is not  $t$-minimal, a contradiction. An analogous argument might be used to show the other case where $e$ is a $m/t$-edge at both $T_3$ and $T_1$.   
\end{proof}

\begin{lem}\label{circlet/m}
For the diagram $\Delta$, it is impossible to have a double point circle $C_1$ with \[C_1=e_{32}(b/m,m/t) \cup e_{23}(m/t,b/m).\]
\end{lem}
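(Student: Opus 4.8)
The plan is to assume that the double point circle $C_1=e_{32}(b/m,m/t)\cup e_{23}(m/t,b/m)$ occurs in $\Delta$ and to show that this forces $\Delta$ to fail $t$-minimality, contradicting the standing hypothesis. Observe first that $C_1$ uses up both $b/m$-edges at $T_3$ and both $m/t$-edges at $T_2$: these are precisely the two edges of the bigon, each being a $b/m$-edge at $T_3$ and an $m/t$-edge at $T_2$. I would begin by passing to the double decker set in the torus $F$ and recording the two decker curves carried by $C_1$. Along each edge the higher of the two sheets is consistently the one that is the middle sheet at $T_3$ and the top sheet at $T_2$; hence the upper decker $C_1^U$ is a simple closed curve running through $T_3^M$ and $T_2^T$, while the lower decker $C_1^L$ is a simple closed curve running through $T_3^B$ and $T_2^M$.

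The first key step is to locate $C_1^U$ in $H_1(F)$. The curve $C^U\cup C^L$ meets the double decker set only at $T_3^T$ and $T_3^B$, so it is disjoint from $C_1^U$ (which passes through $T_3^M$ and $T_2^T$ and avoids the branch points of $C$); hence $[C_1^U]\cdot[C^U\cup C^L]=0$. On the other hand, exactly as in the proof of Lemma \ref{TT}, the lower decker $C_1^L$ meets $C^U\cup C^L$ transversely in the single point $T_3^B$, so $[C_1^L]$ and $[C^U\cup C^L]$ form a basis of $H_1(F)\cong\mathbb{Z}^2$. Because $C_1^U$ and $C_1^L$ lie in different sheets and share no preimage of a triple point, they are disjoint, so $[C_1^U]\cdot[C_1^L]=0$ as well. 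A class pairing to zero with both members of a basis of $H_1(F)$ is zero, whence $[C_1^U]=0$; on the torus this forces $C_1^U$ to bound a disk $D$ in $F$. I emphasise that, unlike the earlier impossibility lemmas, one cannot finish here by the homology contradiction with Lemma \ref{TT}: any curve disjoint from $C^U\cup C^L$ is a multiple of the primitive class $[C^U\cup C^L]$, so no two such curves can cross once, and the disk $D$ must be exploited geometrically instead.

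With $D$ in hand I would follow the template of Lemma \ref{loop} together with the cleanup in Lemma \ref{lo}. Since $\Delta$ has only three triple points and $C_1^U$ already runs through $T_3^M$ and $T_2^T$, the projection of the interior of $D$ contains at most one triple point; any trivial arcs or nested simple closed curves inside $D$ are first removed by the moves $R-4^-$, $R-1^+$ and $R-7$ without creating new triple points, exactly as in Lemma \ref{lo}. At this stage the bigon $C_1$ bounds a clean disk, and an isotopy of $F$ in $\mathbb{R}^4$ --- described in $\mathbb{R}^3$ by an $R-5^+$ move followed by an $R-6^-$ move, in the spirit of Lemmas \ref{loim} and \ref{c} --- pushes the two sheets carrying $C_1$ apart and simultaneously removes the triple points $T_2$ and $T_3$. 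Thus $\Delta$ would not be $t$-minimal, contradicting the assumption and proving that $C_1$ cannot occur.

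The hard part will be this final elimination step. Unlike Lemma \ref{c}, the bigon $C_1$ has no common middle sheet at its two corners --- one shared sheet is the middle sheet at $T_3$ but the top sheet at $T_2$, and the other is the bottom sheet at $T_3$ but the middle sheet at $T_2$ --- so the isotopy removing $T_2$ and $T_3$ must be constructed by hand rather than quoted verbatim, presumably accompanied by a figure of the $3$-ball neighbourhood of the projection of $D$ before and after the move. A secondary point requiring care is verifying that the interior of $D$ really does contain at most one triple point and that the cleanup moves leave $C$ undisturbed; both should follow from the connection constraints of Table 1 together with Proposition \ref{b/mb/m} and Corollary \ref{m/tm/t}, but they ought to be checked against each admissible completion of the diagram.
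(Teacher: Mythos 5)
Your homological setup is correct, and it is in fact sharper than anything the paper states explicitly: since $[C_1^L]$ and $[C^U\cup C^L]$ form a basis of $H_1(F)$ and $C_1^U$ is disjoint from both, indeed $[C_1^U]=0$ and $C_1^U$ bounds a disk $D$ in the torus. But the argument breaks at exactly the step you flag as ``the hard part,'' and the problem is worse than an unconstructed isotopy: the premise that $D$ can be made clean is false. The boundary $\partial D=C_1^U$ passes through $T_3^M$ and $T_2^T$, and at those two points $C_1^U$ is crossed \emph{transversally} by other decker curves --- at $T_3^M$ by the lower decker arcs of the $m/t$-edges at $T_3$, and at $T_2^T$ by the upper decker arcs of the $b/t$-edges at $T_2$ (these edges exist and are not part of $C_1$). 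A curve crossing $\partial D$ transversally at a boundary point necessarily enters the interior of $D$, so $\mathrm{int}(D)$ meets the double decker set no matter how $D$ is chosen or deformed; moreover these penetrating arcs are neither closed curves nor trivial arcs contained in $D$, so the cleanup of Lemma \ref{lo} (the moves $R-4^-$, $R-1^+$, $R-7$) cannot remove them. Consequently the configuration never reduces to one where an $R-5^+$/$R-6^-$ elimination applies; whether $T_2$ and $T_3$ can be cancelled in the presence of those arcs is precisely the content that would have to be proved, and you give no argument for it.

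The paper's proof avoids this entirely and is purely combinatorial: assuming $C_1$ exists, it runs a finite case analysis (via Table 1) on where the remaining edges --- starting from an $m/t$-edge at $T_3$ --- can attach. Every completion is killed either by Lemma \ref{loop} (a double point loop is forced), by Corollary \ref{m/tm/t} (an edge that is $m/t$ at two distinct triple points is forced), or, in the single surviving subcase where the circle $C_3=e_{31}(m/t,b/m)\cup e_{13}(b/m,m/t)$ arises, by homology: $C_1^U$ and $C_3^L$ cross exactly once at $T_3^M$, hence generate $H_1(F)$, forcing $[C^U\cup C^L]=0$ in contradiction with Lemma \ref{TT}. (In your language, that subcase makes $[C_1^U]$ primitive, contradicting your computation $[C_1^U]=0$; so your observation would give a clean alternative finish there, but only there.) In short, your geometric route has a genuine, unfilled gap at its core, and the statement is actually established by edge-connection case analysis rather than by any elimination move.
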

\begin{proof}
Let $e_1$ be a $m/t$-edge at $T_3$. From Table 1, the other boundary point of $e_1$ is either (i) $T_2$ or (ii) $T_1$. 
\begin{itemize}
\item[Case (i)] First assume that the other endpoint of $e_1$ is $T_2$. In particular, $e_1$ is a $b/m$-edge at $T_2$. Let $e_2$ be the $b/m$-edge at $T_2$ that is opposite to $e_1$. Consider the following cases.
\begin{itemize}
\item[(i-1)] Suppose the other endpoint of $e_2$ is $T_3$. Then $e_2$ is a $m/t$-edge at $T_3$. We get the double point circle $C_2= e_{32}(m/t,b/m) \cup  e_{23}(b/m,m/t)$. It is not difficult to see that $\Delta$ has a double point loop that is based at $T_1$. 
\item[(i-2)] Suppose the other endpoint of $e_2$ is $T_1$. From Table 1, $e_2$ is a $b/t$-edge at $T_1$. Let $e_3$ be the $b/t$-edge at $T_1$ that is opposite to $e_2$. The other endpoint of $e_3$ must be $T_1$ by Table 1 and this gives a double point loop in $\Delta$ based at $T_1$. 
\item[(i-3)] If the other endpoint of $e_2$ is $T_2$, then $\Delta$ has a double point loop based at $T_2$.
\end{itemize}
In all cases (i-1) and (i-2) and (i-3), $\Delta$ is not $t$-minimal by Lemma \ref{loop}. This is a contradiction. Hence, the other endpoint of $e_1$ cannot be $T_2$. 
\item[Case (ii)] Suppose on the contrary that the other boundary point of $e_1$ is $T_1$. By Corollary \ref{m/tm/t}, $e_1$ cannot be a $m/t$-edge at $T_1$. Therefore by Table 1, $e_1$ is a $b/m$-edge at $T_1$. Let $e_2$ be the $b/m$-edge at $T_1$ that is opposite to $e_1$. The other endpoint of $e_2$ cannot be $T_1$ by Lemma \ref{loop} and it is either (ii-1) $T_2$ or (ii-2) $T_3$.

\begin{itemize}
\item[(ii-1)] Suppose $e_2$ is bounded by $T_2$. In fact, $e_2$ is a $b/t$-edge at $T_2$. Let $e_3$ be the $b/t$-edge at $T_2$ that is opposite to $e_2$. Then $e_3$ is a $m/t$-edge at $T_1$. Let $e_4$ be the $m/t$-edge at $T_1$ that is opposite to $e_3$.  From Table 1, the other endpoint of $e_4$ can be $T_1$ or $T_3$.  In particular,  $e_4$ cannot be ended with  $T_1$ because in such a case $\Delta$ is not $t$-minimal by Lemma \ref{loop}. Also it is impossible for $e_4$ to be a $m/t$-edge at $T_3$ by Corollary \ref{m/tm/t}. Hence, this connection cannot occur.
\item[(ii-2)] Assume that $e_2$ is bounded by $T_3$. Then  $e_2$ is a $m/t$-edge at $T_3$. We obtain the double point circle $C_3=e_{31}(m/t,b/m) \cup  e_{13}(b/m,m/t)$. Consider the double decker set of $\Delta$ in $F$. The simple closed curves $C_1^U$ and $C_3^L$ meet at a single transverse crossing point. Therefore, $[C_1^U]$ and $[C_3^L]$ are the two distinct generators of the first homology group of $F$, $H_1(F)$. The curve $C^U \cup C^L$ does not meet any of the generators and thus $C^U \cup C^L$ is homologous to zero in $F$. This contradicts Lemma \ref{TT}. 
\end{itemize}
\end{itemize}   
\end{proof}
\begin{prop}\label{2}
For  $\Delta$, it is impossible to have a double edge $e$ such that $e$ is a $b/m$-edge at $T_3$ and a $m/t$-edge at  $T_2$ (resp. $T_1$).
\end{prop}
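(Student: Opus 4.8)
The plan is to suppose, toward a contradiction, that $\Delta$ contains a double edge $e$ that is a $b/m$-edge at $T_3$ and an $m/t$-edge at $T_2$, and then to show that every way of completing $\Delta$ either reproduces a configuration already excluded above or forces $[C^U\cup C^L]=0$, contradicting Lemma \ref{TT}. Because $T_1$ and $T_2$ have the same Alexander numbering and sign (Lemma \ref{s}), it suffices to argue the case at $T_2$; the ``resp. $T_1$'' statement then follows by interchanging $T_1$ and $T_2$.

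First I would examine the companion $b/m$-edge $e'$ at $T_3$, namely the $b/m$-edge opposite $e$ along the bottom--middle double point curve through $T_3$. By Table 1 the other endpoint of $e'$ is a $b/m$- or an $m/t$-edge at $T_1$ or $T_2$, and Proposition \ref{b/mb/m} removes the two $b/m$ possibilities, so $e'$ is an $m/t$-edge at $T_1$ or at $T_2$. If $e'$ is an $m/t$-edge at $T_2$, then $e$ and $e'$ are simultaneously the two $b/m$-edges at $T_3$ and the two $m/t$-edges at $T_2$, so they close up into the circle $e_{32}(b/m,m/t)\cup e_{23}(m/t,b/m)$, which is exactly the one forbidden by Lemma \ref{circlet/m}. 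If instead $e'$ is an $m/t$-edge at $T_1$, I would keep tracing the double point curve carrying $e$ and $e'$: writing $e_1$ and $e_1'$ for the $m/t$-edges opposite $e$ at $T_2$ and opposite $e'$ at $T_1$, Table 1 together with Lemma \ref{loop} (no double point loops), Corollary \ref{m/tm/t} (no second $m/t$-edge at $T_3$), and the fact that both $b/m$-edges at $T_3$ are already used, pins down $e_1=e_{21}(m/t,b/t)$ and $e_1'=e_{12}(m/t,b/t)$. The two $m/t$-edges at $T_3$ must then run to $b/m$-edges at $T_1$ or $T_2$ (again Corollary \ref{m/tm/t}), and the remaining $b/t$-edges at $T_1,T_2$ are forced onto the leftover $b/m$-slots, so only finitely many completions survive.

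In each surviving completion I would build, exactly as in Lemmas \ref{c1}, \ref{T1} and \ref{T3}, two simple closed decker curves $\alpha$ and $\beta$ out of the upper and lower decker arcs of the edges identified above, chosen so that they meet transversally in a single preimage of a triple point; the natural choice crosses at $T_3^M$, where the $b/m$-upper decker through $T_3$ (containing $\overline{e^U}$ and $\overline{e'^U}$) meets the $m/t$-lower decker through $T_3$ (containing the lower deckers of the two $m/t$-edges at $T_3$). Since the non-trivial arc $C$ passes only through $T_3$, the curve $C^U\cup C^L$ can meet another decker curve only at $T_3^T$ or $T_3^B$; because $\alpha$ and $\beta$ are assembled from middle-sheet arcs at $T_3$ and so avoid both of these points, $C^U\cup C^L$ is disjoint from each generator, which forces $[C^U\cup C^L]=0$ and contradicts Lemma \ref{TT}.

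The main obstacle is this second case: unlike the first it does not collapse to a single previously-excluded circle, so it must be closed by the homology argument, and the genuine work is the bookkeeping---enumerating the finitely many ways the leftover $b/t$- and $b/m$-slots at $T_1$ and $T_2$ can be joined and checking, in each, that $\alpha$ and $\beta$ are embedded, meet exactly once, and stay clear of $T_3^T$ and $T_3^B$. This is the same kind of upper/lower decker tracking already carried out in the earlier lemmas, so no new tool is required, only care about which sheet each arc occupies near every triple point.
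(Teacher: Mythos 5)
Your proposal is correct and follows essentially the same route as the paper: the same battery of constraints (Table 1, Lemma \ref{loop}, Proposition \ref{b/mb/m}, Corollary \ref{m/tm/t}, Lemma \ref{circlet/m}) forces the connections step by step, your Case A (the companion $b/m$-edge at $T_3$ returning to $T_2$) is exactly the two-edge circle the paper also kills with Lemma \ref{circlet/m}, and your only loop-free completion in Case B is precisely the single eight-edge circle through all of $T_1,T_2,T_3$ that the paper arrives at. The one difference is inessential: where you re-run the decker-curve homology argument against Lemma \ref{TT}, the paper simply notes that this forced configuration has a unique non-trivial double point circle and cites Lemma \ref{c1}, whose proof is exactly the $\alpha,\beta$ computation you describe.
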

\begin{proof}
We prove the case where $e$ is a $b/m$-edge at $T_3$ and a $m/t$-edge at $T_2$. The other case where $e$ is a $b/m$-edge at $T_3$ and a $m/t$-edge at $T_1$ can be analogously shown. Let $e_1$ be the $m/t$-edge at $T_2$ that is opposite to $e$. If the other endpoint of $e_1$ is $T_2$, then we obtain a double point loop based at $T_2$ in $\Delta$. In such a case, $\Delta$ is not $t$-minimal by Lemma \ref{loop}, a contradiction.  From Corollary \ref{m/tm/t} and Lemma \ref{circlet/m}, $e_1$ is neither a $m/t$-edge at $T_3$ nor a $b/m$-edge at $T_3$. So the other endpoint of $e_1$ cannot be $T_3$. The remaining case to consider is that the other boundary point of $e_1$ is $T_1$. In this case, $e_1$ is a $b/t$-edge at $T_1$. Let $e_2$ be the $b/t$-edge at $T_1$ that is opposite to $e_1$. The double edge $e_2$ is a $b/m$-edge at $T_2$ . Let $e_3$ be the $b/m$-edge at $T_2$ that is opposite to $e_2$. Then $e_3$ is neither a $b/m$-edge at $T_3$ by Proposition \ref{b/mb/m} nor a $b/t$-edge at $T_2$ by Lemma \ref{loop}. Therefore, $e_3$ is a $m/t$-edge at $T_3$. Let $e_4$ be the $m/t$-edge at $T_3$ that is opposite to $e_3$. Corollary \ref{m/tm/t} and Table 1 imply that $e_4$ is a $b/m$-edge at $T_1$. From Proposition \ref{b/mb/m} and Table 1 , the $b/m$-edge at $T_1$ that is opposite to $e_4$ is a $b/t$-edge at $T_2$. Now it is not difficult to see that $\Delta$ has a unique non-trivial double point circle which contradicts Lemma \ref{c1}.   
\end{proof}
\section{The main result}
We are now ready to state and prove the main theorem.
\begin{thm}
There is no surface-knot of genus one with triple point number invariant equal to three.
\end{thm}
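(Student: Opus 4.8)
The plan is to argue by contradiction. Suppose a genus-one surface-knot $F$ with $t(F)=3$ existed, and fix a $t$-minimal diagram $\Delta$ of it. By Lemma \ref{s} I may choose the labelling so that the triple points are $T_1,T_2,T_3$ with $\lambda(T_1)=\lambda(T_2)=\lambda(T_3)$, $\epsilon(T_1)=\epsilon(T_2)=-\epsilon(T_3)$, and so that the other endpoint of each $b/t$-edge at $T_3$ is a branch point while those at $T_1,T_2$ are triple points. Everything then comes down to tracing what can happen to a single $b/m$-edge incident to $T_3$.

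First I would select a $b/m$-edge $e$ at $T_3$ and locate its other endpoint. By Lemma \ref{R6} this endpoint cannot be a branch point, since otherwise $T_3$ would be eliminable and $\Delta$ would fail to be $t$-minimal; and by Lemma \ref{loop} the edge $e$ is not a double point loop. Hence its other endpoint is one of $T_1$ or $T_2$, and consulting Table 1 the only admissible possibilities are that $e$ is a $b/m$-edge or an $m/t$-edge at that triple point.

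Next I would close off both of these possibilities using the impossibility results already established. Proposition \ref{b/mb/m} rules out $e$ being a $b/m$-edge at both $T_3$ and $T_1$ (resp. $T_2$), and Proposition \ref{2} rules out $e$ being a $b/m$-edge at $T_3$ that is simultaneously an $m/t$-edge at $T_1$ (resp. $T_2$). Since these two statements cover exactly the entries that Table 1 allows for a $b/m$-edge at $T_3$, no admissible endpoint remains. This contradiction shows that no such $\Delta$, and therefore no such $F$, can exist, which is the assertion of the theorem.

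I expect the final step to be routine bookkeeping: the genuine work has already been discharged inside Propositions \ref{b/mb/m} and \ref{2}, whose proofs rely on the non-separating curve $C^U\cup C^L$ of Lemma \ref{TT} together with the case analyses of Lemmas \ref{T1}, \ref{T3} and \ref{circlet/m} and Corollary \ref{m/tm/t}. The only subtlety worth flagging is the systematic use of the ``resp. $T_1$'' versions of each proposition, which is legitimate precisely because $T_1$ and $T_2$ share the same Alexander numbering and sign, so that the symbol $T_k$ may stand for either.
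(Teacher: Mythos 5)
Your proposal is correct and takes essentially the same route as the paper's own proof: fix a $t$-minimal diagram, normalize the labelling via Lemma \ref{s}, trace a $b/m$-edge at $T_3$ to its other endpoint $T_k$ using Table 1, and rule out both admissible connections by Propositions \ref{b/mb/m} and \ref{2}. The only cosmetic difference is that you explicitly re-justify the Table 1 restrictions (no branch-point endpoint, no loop) via Lemma \ref{R6} and Lemma \ref{loop}, which the paper leaves encoded in the table itself.
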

\begin{proof}

Suppose for the sake of a contradiction that $F$ is a surface-knot of genus one and satisfies $t(F)=3$. Let $\Delta$ be a surface-knot diagram of $F$ with three triple points whose triple points are $T_1$, $T_2$ and $T_3$. By Lemma \ref{s}, we may assume that $\epsilon(T_1)=\epsilon(T_2)=-\epsilon(T_3)$ and $\lambda(T_1)=\lambda(T_2)=\lambda(T_3)$. Let $e$ be a $b/m$-edge at $T_3$. Let $T_k$ refers to $T_1$ or to $T_2$. From Table 1, the other boundary point of $e$ is $T_k$ where $e$ is a $b/m$-edge at $T_k$ or a $m/t$-edge at $T_k$. But these connections contradict Propositions \ref{b/mb/m} and \ref{2}. The Theorem follows.
\end{proof}

In \cite{amtsu1}, we proved that for genus-one surface-knots, the triple point number is at least three. From this and the previous theorem, we have the following corollary.
\begin{cor}
For surface-knots of genus one, the triple point number $t(F)$ satisfies $t(F) \geq 4$.
\end{cor}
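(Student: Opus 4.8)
The plan is to derive the Corollary by combining the Theorem just established with the authors' earlier paper \cite{amtsu1} and the classical non-existence of $t(F)=1$; no fresh geometric input is required. The idea is to assemble a short chain of exclusion statements that together eliminate each of the values $1$, $2$, and $3$ for a genus-one surface-knot, leaving $4$ as the smallest possible positive value.

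First I would recall that $t(F)=1$ is impossible for every surface-knot, by \cite{paper_2Sat}. Next, the result of \cite{amtsu1} — quoted here as the statement that the triple point number of a genus-one surface-knot is at least three — is precisely the assertion that $t(F)=2$ does not occur in genus one. Finally, the Theorem of this section rules out the last remaining value, $t(F)=3$. Concatenating these three facts shows that no genus-one surface-knot realizes any of $1$, $2$, or $3$, so that the only value of $t(F)$ below $4$ occurring in genus one is $0$; equivalently $t(F)\geq 4$ once the case $t(F)=0$ is set aside.

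I would also record that the bound is sharp and that the excluded range is the genuine content of the statement. The value $t(F)=0$ is attained by pseudo-ribbon genus-one surface-knots, while $t(F)=4$ is attained by the example from the Introduction, namely the $2$-twist-spun trefoil with a trivial $1$-handle attached; thus $4$ is the smallest positive triple point number realized in genus one, and the inequality cannot be improved.

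No real obstacle is expected, since all of the substantive work resides in the Theorem and in \cite{amtsu1}; the Corollary is a bookkeeping step. The only point deserving care is to state the logical combination cleanly — that ``at least three'' from \cite{amtsu1} together with ``not equal to three'' from the Theorem upgrades to ``at least four'' — and to keep track of the fact that the invariant can still equal $0$ for pseudo-ribbon surface-knots, so that the assertion is understood for knotted surfaces whose triple point number is nonzero.
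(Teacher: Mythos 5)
Your proof is correct and takes essentially the same route as the paper: the paper also obtains the corollary purely by bookkeeping, combining the genus-one bound from \cite{amtsu1} (triple point number at least three, i.e.\ the exclusion of the values $1$ and $2$) with the Theorem's exclusion of the value $3$. Your explicit handling of the pseudo-ribbon case $t(F)=0$ is, if anything, more careful than the paper's statement, which asserts $t(F)\geq 4$ without that caveat.
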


\begin{Acknowledgement}
The authors would like to thank Prof. Seiichi Kamada and Prof. Scott Carter for their valuable comments and discussions.
\end{Acknowledgement}

\noindent{\sc Tsukasa Yashiro}
\newline
{\it Independent Mathematical Institute, Japan.\\
e-mail} : {\verb|tyashiro@cobalt.net.tokai-u.jp|} \\ \\
\noindent{\sc Amal Al Kharusi}
\newline
{\it Pathway Programme, Muscat University, Oman\\
e-mail} : {\verb|amalalkharusi2@gmail.com|}

\end{document}